\newtheorem{remark}{Remark}[section]
\begin{document}
 \title{Plane-wave analysis of a hyperbolic system of equations with relaxation
in $\mathbb{R}^\MakeLowercase{d}$\thanks{M.V.d.H. gratefully acknowledges support from the Simons Foundation under the MATH + X program, the National Science Foundation under grant DMS-1559587, and the corporate members of the Geo-Mathematical Group at Rice University. J.-G.L. is supported by the National Science Foundation under grant DMS-1514826 and KI-Net RNMS11-07444.}}

          %For each author, make a block with the following macros:

          \author{MAARTEN V. DE HOOP\thanks{Department of Computational and Applied Mathematics, Rice University, Houston, TX 77005, USA (mdehoop@rice.edu).}
          \and  JIAN-GUO LIU\thanks{Department of Mathematics and Department of Physics, Duke University, Durham, NC 27708, USA (jliu@math.duke.edu).}
          \and PETER A. MARKOWICH\thanks{Applied Mathematics and Computer Science Program, CEMSE Division, King Abdullah University of Science and Technology, Thuwal 23955-6900, Saudi Arabia and Faculty of Mathematics, University of Vienna, Vienna A-1090, Austria (peter.markowich@kaust.edu.sa).} 
          \and NAIL S. USSEMBAYEV\thanks{Applied Mathematics and Computer Science Program, CEMSE Division, King Abdullah University of Science and Technology 
Thuwal, 23955-6900, Saudi Arabia (nail.ussembayev@kaust.edu.sa). \break Corresponding author: Nail S. Ussembayev }}

         \pagestyle{myheadings}\thispagestyle{plain} \markboth{Plane-wave analysis of a hyperbolic system of equations with relaxation
in $\mathbb{R}^\MakeLowercase{d}$}{M. V. de Hoop et al.} \maketitle

          \begin{abstract}
               We consider a multi-dimensional scalar wave equation with
  memory corresponding to the viscoelastic material described by a
  generalized Zener model. We deduce that this relaxation system is an example of a non-strictly hyperbolic system satisfying Majda's block structure condition. Well-posedness of the associated Cauchy
  problem is established by showing that the symbol of the spatial
  derivatives is uniformly diagonalizable with real eigenvalues. A
  long-time stability result is obtained by plane-wave analysis when
  the memory term allows for dissipation of energy.
          \end{abstract}
\begin{keywords} characteristic fields of constant multiplicity; eigenvalues; viscoelasticity; memory effect; Zener model; stability; energy methods
\end{keywords}

 \begin{AMS} 35B35; 35L40; 74D05
\end{AMS}
          \section{Introduction}\label{intro}
          
The theory of viscoelasticity describes materials exhibiting a
combination of both elastic solid (deformation eventually disappears
when the load is removed) and viscous (Newtonian) fluid
characteristics. Wave propagation in viscoelastic unbounded or
semi-bounded media is a relevant idealization in some important
real-world problems arising in different fields: geophysics, applied
mechanics, material science, acoustics etc.

Viscoelastic materials are modeled by constitutive laws relating the
stress to the history of the strain and entering the equation of
motion in the form of a convolution integral in time. The resulting
integro-differential equation can be written as a system of partial
differential equations with a relaxation term and described in Fourier
space as an exponential evolution operator acting on a vector
representing the initial conditions.  The system is hyperbolic when
the matrix appearing in the evolution operator is diagonalizable with
real eigenvalues and its eigenspace is complete. If, in addition, all eigenvalues are distinct, the system is said to be strictly hyperbolic.  One of the important motivations to study strictly hyperbolic systems is that they are invulnerable to perturbations by lower-order terms. Unfortunately, many interesting examples of hyperbolic systems describing various physical phenomena are not strictly hyperbolic and it is not known in general whether such systems remain hyperbolic under perturbations by lower-order terms.

 Majda and Osher \cite{majda} proved that the strict hyperbolicity assumption used in the construction of Kreiss' symmetrizer could be replaced by a weaker assumption called the "block structure condition" which is satisfied by several non-strictly hyperbolic systems including Maxwell's equations of electrodynamics, the linearized shallow water equations and the Euler equations of gas dynamics. However, each system of interest required a separate verification of this property due to the lack of a universal criterion. This was the state of affairs until M\'{e}tivier \cite{metivier} extended Majda's work establishing the block structure condition for a class of hyperbolic systems with characteristic fields of constant multiplicity. It is common to refer to such systems simply as "constantly hyperbolic," to wit \\

\begin{definition}\label{def. 1.1}
The operator \[L=\partial_{t}+\sum_{j=1}^{d}A_{j}(x,t)\partial_{x_{j}}\] with $A_{j},B:\mathbb{R}^{d}\times(0,T)\to\mathbb{M}^{N\times N}(\mathbb{R})$ is called constantly hyperbolic if there exist an integer $m\geq1$, natural numbers $l_1,\dots, l_m$ and real valued functions $\lambda_1,\dots, \lambda_m$ analytic away from the origin such that for any $\xi\in\mathbb{S}^{d-1}$ it holds that \[\det\left(\lambda I_N+\sum_{j=1}^{d}\xi_{j}A_{j}\right)=\prod_{i=1}^{m}\left(\lambda+\lambda_{i}(\xi)\right)^{l_{i}},\ \ \ \ \ \ l_1+\hdots+l_m=N\] where all the eigenvalues $\lambda_i(\xi)$ of the symbol $A(\xi)=\sum_{j=1}^{d}\xi_{j}A_{j}$ are real, semi-simple and satisfy $\lambda_1(\xi)<\hdots<\lambda_m(\xi)$. 
\end{definition} \\

 Let us reiterate: if the eigenvalues are semi-simple instead of being simple as in the case of strict hyperbolicity, and their multiplicities remain constant as $(\xi_1,\dots, \xi_d)\in\mathbb{R}^d\setminus\{0\}$ varies, then the corresponding system is called constantly hyperbolic. The notion of constant hyperbolicity is
a slight generalization of the concept of strict hyperbolicity where the
analysis is technically simpler and had allowed more extensive studies in the past. In Sec. \ref{Sec.3} we demonstrate the hyperbolicity of our relaxation system by proving that $A(\xi)$ is diagonalizable with real eigenvalues and verify that the diagonalization is well-conditioned on $\mathbb{S}^{d-1}$. More straightforwardly, hyperbolicity can be shown by appealing to the general structure of the eigenvalues of the system since constant hyperbolicity implies hyperbolicity (see Remark \ref{remark 3.1}).

In a bounded domain, existence and uniqueness of solutions can be
established using the treatment of Lions and Magenes\cite{LionsMagenes} under minimal assumptions on the regularity of the
coefficient functions. A classical analysis regarding equations of the
type \eqref{eq: 2.4} is attributed to Dafermos\cite{dafermos1970abstract}. Here, the
domain can be the whole space, but the requirements on the initial
conditions exclude plane waves. Blazek \textit{et
al.}\cite{blazek2013mathematical} proved the same result for systems
of equations. Kim\cite{kim1994local} obtained existence and
uniqueness of solutions using Friedrichs mollifier techniques assuming
that the coefficient functions are smooth in space and time while
allowing plane-wave initial conditions. Kim's analysis
also motivates the development of a microlocal analogue of the
correspondence principle\cite{paulino2003} in a parametrix
construction starting from plane-wave initial values.

Following B\'{e}cache \textit{et al.}\cite{becache2005mixed} we rewrite the system with relaxation based on
a generalized Zener solid in first-order partial-differential
form. They obtained well-posedness under minimal assumptions
on the regularity of the coefficient functions, again, excluding
plane-wave initial conditions. Here, we study well-posedness of
solutions of such a system with constant (time- and space-independent)
coefficients in the whole space through a plane-wave synthesis and
analysis. This is motivated by the calculations carried out by
Richards\cite{richards1984wave} pertaining to plane-wave reflection
in bimaterials with relaxation. Richards observed that in a
configuration of two distinct homogeneous isotropic viscoelastic
solids separated by a plane interface, at particular scattering angles
plane waves will exhibit an exponentially growing behavior. We will
study the stability of solutions in a generalized Zener solid with an
explicit dependence on the parameters controlling the
relaxation. 

 Solem \textit{et al.}\cite{solem} considered one-dimensional linear hyperbolic systems with a stable relaxation term of rank 1 and pointed out a connection between stability properties of such systems and the theory describing general properties of polynomial roots. In particular, it was shown in \cite{solem} that strictly hyperbolic relaxation systems are linearly stable if and only if the roots of the homogeneous and equilibrium
characteristic polynomials interlace on the imaginary axis. In Sec. \ref{Sec.4} we invoke the Routh-Hurwitz theorem to determine the number of roots of the characteristic polynomial in the right half-plane and mention in Sec. \ref{Sec.5} how the location and multiplicity of roots influence stability.

In \cite{becache2005mixed} B\'{e}cache \textit{et al.} defined the following quantity as the energy of the model 

\begin{equation}\label{eq: 1.1}
E(q,\sigma,t)=\frac{1}{2}\left\Vert \dot{q}\right\Vert _{\rho}^{2}+\frac{1}{2}\left\Vert \epsilon(q)\right\Vert _{{\bf C}}^{2}+\frac{1}{2}\left\Vert s\right\Vert _{({\bf D-C})^{-1}}^{2}
\end{equation}
The sum of the first two terms in \eqref{eq: 1.1} corresponds to the standard energy in the
purely elastic case and the final term is the norm of the difference between viscoelastic and
elastic stresses. It turns out that in the absence of the source term the energy decreases in time if the absorption condition holds, i.e. ${\bf D-C}$ is positive definite where $\bf D$ and $\bf C$ are two symmetric tensors of order four that define the constitutive law (see Ref. \cite{becache2005mixed} for details and notation). In Sec. \ref{Sec.5} we perform a similar analysis in Fourier space and comment on the conditions of energy dissipation.

\section{Memory kernels and relaxation}\label{Sec.2}

 For an arbitrary point $x\in\mathbb{R}^d$ in the medium let the vector-valued
displacement of the point from its position in an undeformed state be $q (x,t)$, let $\sigma_{ij}(x,t)$
be the stress tensor with $(\nabla\cdot\sigma)_i=\sum_{j=1}^d{\partial \sigma_{ij}\over\partial {x_j} }$, let $F(x,t)$ represent the external
forces per unit volume and $\rho(x)$ denote the density. 
The description of wave
propagation in a general medium is expressed by the equation of motion
\begin{equation}\label{eq: 2.1}
\rho \ddot{q}_{i}=(\nabla\cdot\sigma)_i+  F_i, \ \ \ \ \ \ \ \ \ \ \ \ \  i=1,\dots, d
\end{equation}
which follows from the conservation of linear momentum. 

The so-called Zener or standard
linear solid model provides the most general linear constitutive law
between the stress, strain and their rates of change 
\begin{equation}\label{eq: 2.2} 
\sigma+\tau_{\sigma}\dot{\sigma}=M_{R}(\epsilon(q)+\tau_{\epsilon}\dot{\epsilon}(q))
\end{equation}
relating them by three parameters: the deformation modulus $M_{R}$,
the stress relaxation time $\tau_{\sigma}$ and the strain relaxation
time $\tau_{\epsilon}$ \cite{liu1976}.

Rewriting Eq. \eqref{eq: 2.2} in the following equivalent form 
\[
\partial_{t}\left(e^{t/\tau_{\sigma}}\sigma\right)=M_{R}\frac{\tau_{\epsilon}}{\tau_{\sigma}}\partial_{t}\left(e^{t/\tau_{\sigma}}\epsilon\right)+\frac{M_{R}(\tau_{\sigma}-\tau_{\epsilon})}{\tau_{\sigma}^{2}}e^{t/\tau_{\sigma}}\epsilon
\]
and integrating it choosing the initial condition $\sigma_{0}=M_{R}\frac{\tau_{\epsilon}}{\tau_{\sigma}}\epsilon_{0}$ results in the stress-strain relation 
\begin{equation}\label{eq: 2.3}
\sigma=M_{R}\frac{\tau_{\epsilon}}{\tau_{\sigma}}\epsilon+\frac{M_{R}(\tau_{\sigma}-\tau_{\epsilon})}{\tau_{\sigma}^{2}}\int_{0}^{t}e^{-(t-s)/\tau_{\sigma}}\epsilon(s)ds.
\end{equation}
The first term on the right-hand side of \eqref{eq: 2.3} represents Hooke's law and
the second term indicates that the stress at any given instance depends
upon the strain at all preceding times. The idea
that stress depends both on the present and past value of strain is
attributed to Boltzmann. Early contributions are also due to Maxwell,
Kelvin and Voigt\cite{renardy}. 

 Dividing both sides of \eqref{eq: 2.1} by the density and taking the divergence results in 
\[\phi_{tt}=\nabla\cdot\left({1\over \rho}\nabla\cdot\sigma\right)+f\] where $f=\nabla\cdot\left(F/\rho\right)$ and $\phi=\nabla\cdot q$ are scalar-valued functions. 
Substituting Eq. \eqref{eq: 2.3} into to the above equation and remembering that the strain tensor and the displacement vector satisfy $\epsilon_{ij}=1/2(\partial q_i/\partial x_j+\partial q_j/\partial x_i)$ we arrive at the second-order integro-differential
equation modeling viscoelastic motion 
\begin{equation}\label{eq: 2.4}
\phi_{tt}=\nabla\cdot\left(c^{2}(x)\nabla\phi\right)+\int_{0}^{t}\nabla\cdot\left(a(x) e^{-(t-s)/\tau_{\sigma}}\nabla\phi(x,s)\right)ds+f ,
\end{equation}
where $c^{2}(x)=\frac{2\mu+\lambda}{\rho}\frac{\tau_{\epsilon}}{\tau_{\sigma}}$,
$a(x)=\frac{2\mu+\lambda}{\rho}\frac{(\tau_{\sigma}-\tau_{\epsilon})}{\tau_{\sigma}^{2}}$
and deformation modulus is written in terms of the Lame parameters,
that is $M_{R}=2\mu+\lambda$. 

When an elastic body is under the effect of hydrostatic pressure, i.e. when a pressure of the same magnitude acts on every unit area on the surface of the body,  both the strain and stress tensors are determined by their diagonal components. In fact, if $p(x,t)$ is the pressure field, then $\sigma_{ij}=-p\delta_{ij}$. In this case a derivation similar to the one carried out above yields a scalar wave equation  for $p=-1/3\mbox{tr}( \sigma)$ describing the propagation of acoustic
waves in a viscoelastic fluid \cite{petrov} (see also \cite{carcione} for the derivation of a scalar wave equation for the trace of the strain tensor or the dilatation).

Quite often a combination of weightless springs and dashpots filled
with viscous fluids is used as a good mechanical model that describes
anelastic phenomena and the behavior of a variety of materials. A spring
and a dashpot connected in series yield the Maxwell model, while being
connected in parallel give the Kelvin-Voigt model. These models can
be obtained from the Zener model in \eqref{eq: 2.2} by taking the limits $\tau_{\epsilon}\to\infty$
and $\tau_{\sigma}\to0$, respectively. 

The generalized Zener model consists of a number of Zener elements combined
in parallel and takes into account multiple relaxation times. The
total stress acting on the system is the sum of the stresses experienced
by each element $\sigma=\sum_{i=1}^{k}\sigma_{i}$. Denoting the deformation
moduli and relaxation times by
\[
M_{Ri}=\frac{E_{1i}E_{2i}}{E_{1i}+E_{2i}},\tau_{\sigma i}=\frac{\eta_{i}}{E_{1i}+E_{2i}}=\frac{1}{b_{i}},\tau_{\epsilon i}=\frac{\eta_{i}}{E_{2i}},i=1,2,\dots,k ,
\]
where $E_{1i},E_{2i}$ are the Young moduli of the springs in the
$i$th element and $\eta_{i}$ is the viscosity of the corresponding
dashpot we arrive at the generalization of Eq. \eqref{eq: 2.4} with $c^{2}=\sum_{i=1}^{k}M_{Ri}\tau_{\epsilon i}\tau_{\sigma i}^{-1}$
\begin{equation}\label{eq: 2.5}
\phi_{tt}=\nabla\cdot\left(c^{2}( x)\nabla\phi\right)+\sum_{i=1}^{k}\int_{0}^{t}\nabla\cdot\left(a_{i}( x)e^{-b_{i}( x)(t-s)}\nabla\phi( x,s)\right)ds+ f .
\end{equation} \\ \\ \\  \\
%For the sake of mathematical completeness one could consider a further
%generalization by introducing explicit time dependence in $c$ and
%$a$: 
%\begin{equation}\label{eq: 2.5}
%\phi_{tt}=\nabla\cdot\left(c^{2}( x,t)\nabla\phi\right)+\sum_{i=1}^{k}\int_{0}^{t}\nabla\cdot\left(a_{i}(x,s)e^{-b_{i}(x)(t-s)}\nabla\phi(x,s)\right)ds+f.
%\end{equation}
We will assume that \\
\begin{enumerate}
\item[] {\bf [A1]} $c^{2}$ is positive bounded away from zero, $b_{i}>0$ are pairwise distinct and no  sign  condition is imposed on the coefficients $a_{i}\ne0$, unless otherwise stated. \\
\end{enumerate} 

Since \eqref{eq: 2.5} is linear, by considering the difference of solutions we can study the effect of the sufficiently regular external force separately with zero initial conditions. Therefore, in what follows, we put $f=0$. 

%\begin{remark}\label{remark 2.1}
%In the case of constant relaxation times and varying
%density $\rho(x,t)$ the
%conservative structure of the memory term breaks down and we can derive a non-conservative version of Eq. \eqref{eq: 2.1} in full generality: 
%\[
%\phi_{tt}=c^{2}(x,t)\phi_{xx}+\sum_{i=1}^{k}\int_{0}^{t}a_{i}(x,s)e^{-b_{i}(x)(t-s)}\phi_{xx}(x,s)ds
%\]
%This integro-differential equation can be rewritten in first-order partial-differential form \eqref{eq: 2.6}
%by introducing the auxiliary functions\cite{markowich} $u=\phi_{x}$, $v=-\phi_{t}$ and
%$w_{i}=\int_{0}^{t}a_{i}(x,s)e^{-b_{i}(x)(t-s)}\phi_{xx}(x,s)ds$,
%which are different from those used below for the conservative form in
%Eq. \eqref{eq: 2.5}.
%\end{remark}

\subsection*{Initial value problem}

Let $d\geq1$ be the space dimension and $x=(x_{1},x_{2},\dots,x_{d})\in\mathbb{R}^{d}$
be the space and $t\in\mathbb{R}$ the time variables. It is convenient
to formulate the equation of motion derived in the previous section
as a Cauchy initial value problem 
\begin{eqnarray}
 \mathcal{L}(\partial_{t},\nabla)U=\partial_{t}U+\sum_{j=1}^{d}A_{j}(x,t)\partial_{x_{j}}U+B(x,t)U &=& 0 ,  \nonumber \\
 U(x,0) &=& U_{0} , \label{eq: 2.6}
\end{eqnarray}
where $U:\mathbb{R}^{d}\times(0,T)\to\mathbb{R}^{n}$ is the unknown
vector, $A_{j},B:\mathbb{R}^{d}\times(0,T)\to\mathbb{M}^{n\times n}$
are matrix coefficients with $n=kd+d+1\geq3$ being the size of the
system, and the initial datum $U_{0}:\mathbb{R}^{d}\to\mathbb{R}^{n}$
is given in a suitable function space.  Using the substitution 
\begin{align*}
 & u = -\phi_t(x,t) ,\\
& v=c^2(x)\nabla\phi+\sum_{i=1}^{k} \int_{0}^{t}a_{i}(x)e^{-b_{i}(x)(t-s)}\nabla \phi(x,s)ds,\\
& w_{i} = -a_{i}(x)\nabla\phi+ b_{i}(x)\int_{0}^{t}a_{i}(x)e^{-b_{i}(x)(t-s)}\nabla\phi(x,s)ds,\ \ \ i=1,2,\dots,k , \\ 
& v=(v_1, v_2,\dots, v_d), \ \ w_i=(w_{i_1},w_{i_2}, \dots, w_{i_d})
 \end{align*}
Eq. \eqref{eq: 2.5} can be recast as a system 
\begin{eqnarray*}
 u_{t}+\nabla\cdot v &=& 0 ,\\
 v_{t}+c^{2}(x)\nabla u+\sum_{i=1}^{k}w_{i} &=& 0 ,\\
 \left(w_{i}\right)_{t}-a_{i}(x)\nabla u+b_{i}(x)w_{i} &=& 0 ,\ \ \ i=1,2,\dots,k ,
\end{eqnarray*}
which can be written as 
\[
\left(\begin{array}{c}
u\\
v\\
w_{1}\\
\vdots\\
w_{k}
\end{array}\right)_{t}+\underset{A(\nabla)}{\underbrace{\left(\begin{array}{ccccc}
0 & \nabla\cdot & 0 & \cdots & 0\\
c^{2}\nabla & 0 & 0 & \cdots & 0\\
-a_{1}\nabla & 0 & 0 & \cdots & 0\\
\vdots & \vdots & \vdots & \vdots & \vdots\\
-a_{k}\nabla & 0 & 0 & \cdots & 0
\end{array}\right)}}\left(\begin{array}{c}
u\\
v\\
w_{1}\\
\vdots\\
w_{k}
\end{array}\right)+\underset{B\left(u,v,w_{1},\dots,w_{k}\right)^{T}}{\underbrace{\left(\begin{array}{c}
0\\
\sum_{i=1}^{k}w_{i}\\
b_{1}w_{1}\\
\vdots\\
b_{k}w_{k}
\end{array}\right)}}=0.
\]
Expanding $A(\nabla)$ as $A(\nabla)=\sum_{j=1}^{d}A_{j}(x,t)\partial_{x_{j}}$
we arrive at \eqref{eq: 2.6} with $U=(u,v_1,\dots, v_d,w_{1_1},w_{1_2},\dots,w_{k_d})$
and $U_{0}=(u^0,v_1^0,\dots, v_d^0,w_{1_1}^0,w_{1_2}^0,\dots,w_{k_d}^0)$. 

One can recover $\phi(x,t)$ by first noting that 
\begin{align*}
&\phi_{t}(x,t)=-u,\\ & \nabla\phi(x,t) = \frac{v-\sum_{i=1}^{k}\frac{w_{i}}{b_{i}}}{c^{2}+\sum_{i=1}^{k}\frac{a_{i}}{b_{i}}}\end{align*} and then using the fundamental theorem for
gradients. The condition $c^{2}+\sum_{i=1}^{k}\frac{a_{i}}{b_{i}}>0$ is motivated on physical grounds (see assumption {\bf [A2]} and Remark \ref{remark 4.1}) and $b_{i}>0$ by assumption {\bf [A1]}, so $\nabla\phi$ in the second equality is well-defined.

\section{Well-posedness of the initial value problem}\label{Sec.3}

For the rest of the paper we shall consider the constant-coefficient systems, i.e. assume that
$A_{j},B$ are independent of $(x,t)$ and denote the principal part
of $\mathcal{L}$ given in \eqref{eq: 2.6} by 
\[
L=\partial_{t}+\sum_{j=1}^{d}A_{j}\partial_{x_{j}}.
\]
The Fourier transform of \eqref{eq: 2.6} in the spatial directions gives 
\begin{equation}\label{eq: 3.1}
\hat{U}_{t}+i\sum_{j=1}^{d}\xi_{j}A_{j}\hat{U}+B\hat{U}=0,\ \hat{U}(\xi,0)=\hat{U}_{0}
\end{equation}
where $\xi=(\xi_{1},\xi_{2},\dots,\xi_{d})\in\mathbb{R}^{d}$ is a
vector dual to $x$. Using the notation
$A(\xi)=\sum_{j=1}^{d}\xi_{j}A_{j}$ we can write the solution of this
ordinary differential equation as
$\hat{U}(\xi,t)=e^{-t\left(B+iA(\xi)\right)}\hat{U}_{0}(\xi)$.  When
$U_{0}\in H^{s}\left(\mathbb{R}^{d}\right)^{n}$, by taking the inverse
Fourier transform one can show that the Cauchy problem \eqref{eq: 2.6} admits a
continuous solution
\begin{equation}\label{eq: 3.2}
U(x,t)=\frac{1}{(2\pi)^{d/2}}\int_{\mathbb{R}^{d}}e^{ix\cdot\xi}\hat{U}(\xi,t)d\xi
\end{equation}
with values in $H^{s}$ if 
\[
\sup_{\xi\in\mathbb{R}^{d},0\leq t\leq T}\left\Vert e^{-t\left(B+iA(\xi)\right)}\right\Vert <\infty
\]
which is equivalent  (see, for example, proposition 2.I.1 in \cite{rauch}) to writing 
\begin{equation}\label{eq: 3.3}
\sup_{\xi\in\mathbb{R}^{d}}\left\Vert e^{-iA(\xi)}\right\Vert <\infty.
\end{equation}
Throughout this paper, we will use the matrix norm $\left\Vert
M\right\Vert =\sup_{|x|=1}|Mx|$ induced by the Euclidean norm. Notice that the property \eqref{eq: 3.3} does not depend on time once $t\ne 0$ since $tA(\xi)=A(t\xi)$. We can also absorb the minus sign by virtue of the change $\xi\to-\xi$. \\

\begin{definition}\label{def. 3.1}
The operator $L$ is called hyperbolic if the corresponding symbol
$A(\xi)$ satisfies \eqref{eq: 3.3}.
\end{definition} \\

\begin{proposition}\label{prop. 3.1}
Assume that {\bf [A1]} holds. The
matrix $A(\xi)$ is uniformly diagonalizable with real eigenvalues:
There exists $P(\xi)$ such that $P(\xi)A(\xi)P^{-1}(\xi)$ is diagonal
and real for all $\xi\in\mathbb{R}^{d}$ and
\[
   \sup_{\xi\in \mathbb{S}^{d-1}}\left\Vert P^{-1}(\xi)\right\Vert
        \left\Vert P(\xi)\right\Vert <\infty .
\]
\end{proposition}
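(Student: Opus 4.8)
The plan is to compute the symbol $A(\xi)$ explicitly, exhibit its eigenvalues and a full set of eigenvectors, and then check that the resulting eigenvector matrix $P^{-1}(\xi)$ has condition number bounded uniformly on the sphere. First I would observe that $A(\xi)$ acts trivially on the last $kd$ components in the obvious way: writing $U=(u,v,w_1,\dots,w_k)$ and $\xi\in\mathbb{S}^{d-1}$, the action of $A(\xi)$ produces $(\xi\cdot v,\; c^2\xi u,\; -a_1\xi u,\dots,-a_k\xi u)$. Hence any vector whose $u$-component vanishes and whose $v$ is orthogonal to $\xi$ lies in the kernel, and likewise each $w_i$-slot orthogonal to $\xi$ is in the kernel; this already accounts for an eigenvalue $0$ of multiplicity $n-2=kd+d-1$. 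The remaining two-dimensional piece is the invariant subspace spanned by $e_u$ and the ``longitudinal'' directions $(\xi\cdot v$-component$)$; restricting $A(\xi)$ there gives the $2\times 2$ block $\begin{pmatrix}0&1\\ c^2+\sum_i a_i\cdot 0 & 0\end{pmatrix}$ — more precisely, after bookkeeping, the longitudinal dynamics decouple with characteristic speed $\pm c$ (the $w_i$ couple only through the zeroth-order term $B$, not through $A(\xi)$), so the two nonzero eigenvalues are $\pm c$. Thus $A(\xi)$ has eigenvalues $\{+c,-c,0\}$ with $0$ of constant multiplicity $kd+d-1$, all real and semisimple, which incidentally also verifies the constant-hyperbolicity claim alluded to in Remark~\ref{remark 3.1}.

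Next I would write down $P(\xi)$ and $P^{-1}(\xi)$ in closed form. For the kernel one picks, for each $\xi\in\mathbb{S}^{d-1}$, an orthonormal basis $\{\xi^{(2)},\dots,\xi^{(d)}\}$ of $\xi^{\perp}$ — this can be done smoothly only locally, but that is irrelevant since we only need a measurable choice with controlled norms, and any orthonormal completion has the $v$- and $w_i$-blocks built from unit vectors, so those columns are orthonormal. The two longitudinal eigenvectors for $\pm c$ are explicit finite vectors whose entries involve $c^2$ and the $a_i$; since $c^2$ is bounded away from $0$ and above and the $a_i$ are fixed constants, these vectors have norm bounded above and below by positive constants independent of $\xi$. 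Assembling, $P^{-1}(\xi)$ is a matrix whose columns are: $kd+d-1$ orthonormal vectors and two fixed-length longitudinal vectors, and moreover the longitudinal block is orthogonal to the kernel block by construction. Therefore $\|P^{-1}(\xi)\|$ and $\|P(\xi)\|$ are controlled by the norms and the mutual angles of these finitely many building blocks, all of which are uniform in $\xi$.

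The main obstacle I anticipate is not the spectral computation but the uniform conditioning: one must make sure the longitudinal eigenvectors for $+c$ and $-c$ do not become nearly parallel, and that neither becomes nearly parallel to the kernel, uniformly over $\mathbb{S}^{d-1}$ and over the admissible range of $c^2$. I would handle this by computing $\det P(\xi)$ (or, equivalently, the Gram determinant of the columns of $P^{-1}(\xi)$) explicitly and showing it is a continuous, nowhere-vanishing function of $(c^2,a_1,\dots,a_k)$; combined with the uniform upper bound on $\|P^{-1}(\xi)\|$, this gives a uniform lower bound on the smallest singular value, hence the desired bound on $\|P(\xi)\|\,\|P^{-1}(\xi)\|$. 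The only genuine subtlety is the lack of a globally continuous choice of basis for $\xi^{\perp}$, but since the hypothesis of the proposition asks only for the existence of $P(\xi)$ with a uniform bound — not for continuity in $\xi$ — a pointwise orthonormal completion suffices, and the bound is then automatic from the structure just described.
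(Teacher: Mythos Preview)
Your approach is valid and genuinely different from the paper's. The paper never writes down explicit eigenvectors; instead it forms the Lagrange spectral projectors $E_j=\prod_{i\ne j}(A-\lambda_i I)/(\lambda_j-\lambda_i)$ for the three distinct eigenvalues $0,\pm c|\xi|$, builds the symmetrizer $H(\xi)=\sum_j E_j^{T}E_j$, and observes that $H^{1/2}AH^{-1/2}$ is symmetric, so $P=OH^{1/2}$ with $O$ orthogonal diagonalizes $A$. The condition number then reduces to $\|H^{1/2}\|\,\|H^{-1/2}\|$: one gets $\|H^{-1/2}\|\le\sqrt3$ from $I=\sum_j E_j$, and $\|H^{1/2}\|$ from explicit computation of $\|E_j\|$, which turns out to be independent of $\xi$ on the sphere. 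This symmetrizer route is tidier because it never asks about angles between eigenspaces. Your direct eigenvector construction also works, and the Gram-determinant check you propose does succeed --- on $\mathbb{S}^{d-1}$ one finds $|\det P^{-1}(\xi)|=2c$, which together with the column-norm bound gives the uniform conditioning.

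Two intermediate claims in your sketch are false, though your own fallback repairs them. First, the span of $e_u$ and the longitudinal $v$-direction is \emph{not} $A(\xi)$-invariant: $A(\xi)e_u=(0,\,c^{2}\xi,\,-a_1\xi,\dots,-a_k\xi)$ has nonzero $w_i$-components, so the eigenvectors for $\pm c$ are actually $(1,\,\pm c\,\xi,\,\mp a_1c^{-1}\xi,\dots,\mp a_kc^{-1}\xi)$ and carry $w$-entries. Second, precisely because of those $w$-entries, these longitudinal eigenvectors are \emph{not} orthogonal to the full kernel: any kernel vector with a $w_i$-component parallel to $\xi$ (and the whole $w_i$-slot lies in the kernel, not just its $\xi^{\perp}$ part) has nonzero inner product with them. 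So the block-orthogonality you assert fails, and you genuinely need the Gram-determinant argument --- or the paper's symmetrizer --- to control the conditioning; fortunately that argument goes through exactly as you outline.
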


\begin{proof} A simple computation shows that the characteristic
equation of $A$ splits as 
\[
\lambda^{kd+d-1}\left(\lambda^{2}-c^{2}|\xi|^{2}\right)=p_{1}^{kd+d-1}(\lambda,\xi)p_{\pm}(\lambda,\xi) .
\] where $p_1(\lambda,\xi)=\lambda$ and $p_{\pm}(\lambda,\xi)=\lambda^{2}-c^{2}|\xi|^{2}$. 
Observe that $p_{\pm}$ and $p_1$ are homogeneous polynomials in $\lambda, |\xi|$
and $\lambda$, respectively, with real and simple roots, and they have
no common root for $\xi\in\mathbb{R}^{d}\backslash\{0\}$. Let $$E_{j}=\prod_{i\ne j}\frac{A-\lambda_{i}I_{n}}{\lambda_{j}-\lambda_{i}}, \ \ \ \ \mbox{for } j=1, 2, 3 \mbox{ and } n=kd+d+1$$
with $\lambda_1=0$ and $\lambda_{2,3}=\pm c |\xi|$. One can check that $E_j$'s are mutually orthogonal and complete in the sense that $E_{i}E_{j}=\delta_{ij}E_{j}$
and $\sum_{j=1}^{3}E_{j}=I_{n}$ and verify that the following decomposition takes place  \[
A(\xi)=\sum_{j=1}^{3}\lambda_{j}E_{j}. 
\]
%where the eigenvalues are as above and $$E_{j}=\prod_{i\ne j}\frac{A-\lambda_{i}I_{n}}{\lambda_{j}-\lambda_{i}}$$
%are mutually orthogonal projections satisfying $E_{i}E_{j}=\delta_{ij}E_{j}$
%and $\sum_{j=1}^{3}E_{j}=I_{n}$. 

Next we define a positive-definite matrix $H(\xi)=\sum_{j}E_{j}^{T}E_{j}$
which admits a unique square root. Then, since $A(\xi)^{T}=\sum_{j=1}^{3}\lambda_{j}E_{j}^{T}$,
it follows that $H(\xi)A(\xi)=A(\xi)^{T}H(\xi)$ which implies that
$H^{1/2}(\xi)A(\xi)H^{-1/2}(\xi)$ is symmetric and diagonalizable
in an orthonormal basis. Hence $A(\xi)=P^{-1}(\xi)D(\xi)P(\xi)$ where
$D(\xi)$ is diagonal with real eigenvalues and $P(\xi)=O(\xi)H^{1/2}(\xi)$
with an orthogonal matrix $O(\xi)$. 

It remains to show that $P(\xi)$ is uniformly bounded. We note that
\[
|y|^{2}=\left|\sum_{j=1}^{3}E_{j}y\right|^{2}\leq3\sum_{j=1}^{3}|E_{j}y|^{2}=3|H^{1/2}y|^{2}
\]
and thus $\left\Vert H^{-1/2}(\xi)\right\Vert \leq\sqrt{3}$. 

Using the Lagrange multiplier method with the constraint $|x|=1$ or calculating the largest eigenvalue of $E_j^TE_j$ we find that
\begin{eqnarray*}
 &  & \left\Vert E_{1}\right\Vert ^{2}=1+\frac{1}{c^{4}}\sum_{i=1}^{k}a_{i}^{2} ,\\
 &  & \left\Vert E_{2}\right\Vert ^{2}=\left\Vert E_{3}\right\Vert ^{2}=\frac{1+c^{2}}{4}\left(\frac{1}{c^{2}}+1+\frac{1}{c^{4}}\sum_{i=1}^{k}a_{i}^{2}\right) ,
\end{eqnarray*}
are independent of $\xi$, and therefore 
\[
|H^{1/2}y|^{2}=\sum_{j=1}^{3}|E_{j}y|^{2}\leq\left(\left\Vert E_{1}\right\Vert ^{2}+\left\Vert E_{2}\right\Vert ^{2}+\left\Vert E_{3}\right\Vert ^{2}\right)|y|^{2}\leq\frac{C^{2}}{3}|y|^{2}.
\]
We conclude that the diagonalization is well-conditioned because 
\[
\left\Vert P^{-1}(\xi)\right\Vert \left\Vert P(\xi)\right\Vert =\left\Vert H^{-1/2}(\xi)\right\Vert \left\Vert H^{1/2}(\xi)\right\Vert \leq C
\]
independently of $\xi$. \end{proof}  \\

\begin{definition}\label{def. 3.2}
The Cauchy problem for a constant coefficient
operator $\mathcal{L}$ is weakly (strongly) well-posed if for any
initial data $U_{0}\in H^{s}\left(\mathbb{R}^{d}\right)$ with $s>0$
($s=0$), there is a unique solution $U(t)\in\mathcal{C}\left(\mathbb{R}^{+},H^{s}(\mathbb{R}^{d})\right)$
that satisfies 
\[
\left\Vert U(t)\right\Vert _{L^{2}\left(\mathbb{R}^{d}\right)}\leq Ke^{\alpha t}\left\Vert U_{0}\right\Vert _{H^{s}\left(\mathbb{R}^{d}\right)},\ \ \ t\geq0 ,
\]
with $K>0$ and $\alpha\in\mathbb{R}$ independent of time. 
\end{definition} \\

\begin{lemma}\label{lem. 3.1}(Strang, \cite{strang2})
If $\left\Vert e^{tA}\right\Vert \leq C$ for $t\geq 0$, then $\left\Vert e^{t(A+B)}\right\Vert \leq C e^{tC\left\Vert B\right\Vert}$.
\end{lemma}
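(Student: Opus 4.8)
The plan is to reduce the statement to a scalar integral inequality via Duhamel's (variation-of-parameters) formula and then close it with Gronwall's lemma. Write $S(t)=e^{t(A+B)}$ and $T(t)=e^{tA}$ and fix $t\geq 0$; both are entire matrix-valued functions of $t$ with $S'(t)=(A+B)S(t)$ and $T'(t)=AT(t)=T(t)A$. First I would introduce the auxiliary function $\Phi(s)=T(t-s)S(s)$ on $[0,t]$, whose endpoint values are $\Phi(0)=T(t)$ and $\Phi(t)=S(t)$. Differentiating and using that $A$ commutes with $T(t-s)$ gives $\Phi'(s)=-T(t-s)AS(s)+T(t-s)(A+B)S(s)=T(t-s)BS(s)$, so integrating over $[0,t]$ produces the identity
\[
S(t)=T(t)+\int_{0}^{t}T(t-s)\,B\,S(s)\,ds .
\]

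Next I would take the operator norm $\left\Vert\cdot\right\Vert$ of both sides. Since $t-s\in[0,t]$ whenever $s\in[0,t]$, the hypothesis supplies $\left\Vert T(t-s)\right\Vert\leq C$ and $\left\Vert T(t)\right\Vert\leq C$, so with $\varphi(t):=\left\Vert S(t)\right\Vert$ one obtains the scalar integral inequality
\[
\varphi(t)\leq C+C\left\Vert B\right\Vert\int_{0}^{t}\varphi(s)\,ds .
\]
Because $\varphi$ is continuous, Gronwall's inequality in integral form yields $\varphi(t)\leq C\,e^{C\left\Vert B\right\Vert t}$, which is precisely the assertion. If one prefers not to invoke Gronwall as a black box, the same bound follows by iterating the integral inequality: after $n$ substitutions $\varphi(t)\leq C\sum_{j=0}^{n}(C\left\Vert B\right\Vert t)^{j}/j!+R_{n}(t)$ with a remainder $R_{n}(t)\leq\bigl(\sup_{s\in[0,t]}\varphi(s)\bigr)(C\left\Vert B\right\Vert t)^{n+1}/(n+1)!\to 0$, and letting $n\to\infty$ recovers $\varphi(t)\leq C\sum_{j\geq 0}(C\left\Vert B\right\Vert t)^{j}/j!=C\,e^{C\left\Vert B\right\Vert t}$.

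I do not anticipate any genuine obstacle here: the only points needing a word of care are the justification of the Duhamel identity (which amounts to differentiating the product $T(t-s)S(s)$ and using $AT(t-s)=T(t-s)A$) and the correct quantitative form of Gronwall's lemma. It is worth stressing that the hypothesis is used precisely in the form ``$\left\Vert e^{\tau A}\right\Vert\leq C$ for every $\tau\geq 0$,'' so the bound is available for each argument $t-s$ that appears under the integral; a bound at a single time alone would not suffice. In the intended application to \eqref{eq: 3.1} one takes $-iA(\xi)$ in place of $A$---for which Proposition \ref{prop. 3.1} furnishes the $\xi$-independent constant $C$ via $\left\Vert e^{-i\tau A(\xi)}\right\Vert\leq C$---and the zeroth-order coefficient in place of $B$, producing the $e^{\alpha t}$ growth with $\alpha=C\left\Vert B\right\Vert$ demanded in Definition \ref{def. 3.2}.
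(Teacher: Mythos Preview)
Your proof is correct. The Duhamel identity and the subsequent Gronwall argument are the cleanest and most direct way to establish the bound, and every step you wrote is sound.

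The paper, however, does not argue this way. It quotes an earlier discrete-time lemma of Strang: if $\left\Vert M^{n}\right\Vert\leq C$ for all $n\geq 0$, then $\left\Vert(M+R)^{n}\right\Vert\leq C e^{nC\left\Vert R\right\Vert}$. It then sets $M=e^{\varepsilon A}$ and $R=e^{\varepsilon(A+B)}-e^{\varepsilon A}$, so that $(M+R)^{n}=e^{n\varepsilon(A+B)}$; the hypothesis gives $\left\Vert M^{n}\right\Vert=\left\Vert e^{n\varepsilon A}\right\Vert\leq C$, and one sends $n\to\infty$ along $n\varepsilon=t$ using $nR\to tB$. Your route is more elementary and entirely self-contained, requiring nothing beyond differentiation of the product $T(t-s)S(s)$ and the integral form of Gronwall; the paper's route, by contrast, recycles the discrete inequality (which itself requires a separate combinatorial argument) and a limiting procedure reminiscent of the Lie--Trotter formula. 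Both land on the same constant $Ce^{tC\left\Vert B\right\Vert}$, but yours is the argument one would normally prefer unless the discrete lemma were already available for other reasons.
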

\begin{proof}
This is an exponential analogue of another lemma due to Strang \cite{strang1} which states that if $\left\Vert M^n\right\Vert \leq C$ for $n\geq 0$, then $\left\Vert (M+R)^n\right\Vert \leq Ce^{nC\left\Vert R \right\Vert}$. Setting $M=e^{\varepsilon A}$ and $R=e^{\varepsilon(A+B)}-M$ with sufficiently small $\varepsilon$ we have $\left\Vert M^n\right\Vert \leq C$ for $n\geq 0$ and hence  \[\left\Vert (M+R)^n\right\Vert = \left\Vert e^{n\varepsilon(A+B)}\right\Vert \leq Ce^{nC\left\Vert R \right\Vert} \] Let $n$ tend to infinity, while keeping $t=n\varepsilon$ fixed. In this limit we have $n R \to t B$ and the lemma follows. 
\end{proof} \\

\begin{theorem}\label{th. 3.1}
Assume that {\bf [A1]} holds. The
operator $L$ is hyperbolic and the Cauchy problem for a constant
coefficient operator $\mathcal{L}$ is strongly well-posed.
\end{theorem}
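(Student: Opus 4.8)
The plan is to bootstrap from Proposition~\ref{prop. 3.1}: uniform diagonalizability of the principal symbol already controls $e^{-iA(\xi)}$, and Strang's perturbation lemma (Lemma~\ref{lem. 3.1}) then absorbs the zeroth-order term $B$ at the cost of an exponential factor, which is precisely what Definition~\ref{def. 3.2} allows.

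First I would establish hyperbolicity in the sense of Definition~\ref{def. 3.1}. Since each $A_j$ is constant, $A(\xi)=\sum_j\xi_jA_j$ is homogeneous of degree one, so for $\xi\ne0$ one may write $A(\xi)=|\xi|\,A(\omega)$ with $\omega=\xi/|\xi|\in\mathbb S^{d-1}$. The projectors $E_j$, the matrix $H$, and hence the conjugator $P$ constructed in the proof of Proposition~\ref{prop. 3.1} are homogeneous of degree zero, so $P(\xi)=P(\omega)$ and $A(\xi)=P^{-1}(\omega)\,|\xi|D(\omega)\,P(\omega)$ with $D(\omega)$ real and diagonal. Consequently
\[
e^{-iA(\xi)}=P^{-1}(\omega)\,e^{-i|\xi|D(\omega)}\,P(\omega),
\]
and since $|\xi|D(\omega)$ is real and diagonal the middle factor is unitary, $\bigl\|e^{-i|\xi|D(\omega)}\bigr\|=1$. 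Hence $\|e^{-iA(\xi)}\|\le\|P^{-1}(\omega)\|\,\|P(\omega)\|\le C$, where $C=\sup_{\omega\in\mathbb S^{d-1}}\|P^{-1}(\omega)\|\,\|P(\omega)\|<\infty$ by Proposition~\ref{prop. 3.1}. This is exactly \eqref{eq: 3.3}, so $L$ is hyperbolic; the same computation with $|\xi|$ replaced by $t|\xi|$ gives the stronger uniform bound $\|e^{-itA(\xi)}\|=\|e^{-iA(t\xi)}\|\le C$ for all $\xi\in\mathbb R^d$ and all $t$.

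Next I would handle the full evolution. Solving the Fourier-transformed problem \eqref{eq: 3.1} gives $\hat U(\xi,t)=e^{-t(B+iA(\xi))}\hat U_0(\xi)$, so by Plancherel's theorem strong well-posedness (Definition~\ref{def. 3.2} with $s=0$) amounts to a bound $\|e^{-t(B+iA(\xi))}\|\le Ke^{\alpha t}$, $t\ge0$, uniform in $\xi$. I would apply Lemma~\ref{lem. 3.1} with $-iA(\xi)$ in the role of $A$ and $-B$ in the role of $B$: the previous paragraph furnishes $\|e^{-itA(\xi)}\|\le C$ for $t\ge0$, so the lemma yields
\[
\bigl\|e^{-t(B+iA(\xi))}\bigr\|\le C\,e^{\,tC\|B\|},\qquad t\ge0,
\]
with $C$ and $\|B\|$ independent of $\xi$. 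Then $\|U(t)\|_{L^2(\mathbb R^d)}\le Ce^{\alpha t}\|U_0\|_{L^2(\mathbb R^d)}$ with $\alpha=C\|B\|$, the representation \eqref{eq: 3.2} provides a solution in $\mathcal C(\mathbb R^+,L^2(\mathbb R^d))$ by dominated convergence, and uniqueness is immediate from the scalar ODE \eqref{eq: 3.1} in each Fourier mode.

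I do not expect a genuine obstacle, since the essential work is carried out in Proposition~\ref{prop. 3.1}. The two points that need care are: verifying that the conjugator $P(\xi)$ is homogeneous of degree zero, so that $\sup_{\mathbb S^{d-1}}$ genuinely controls all $\xi\ne0$ (the case $\xi=0$ being trivial); and checking that, after the diagonalizing conjugation, it is exactly the zeroth-order matrix $B$ that plays the role of the perturbation in Strang's lemma, so that no additional $\xi$-dependence leaks into the exponent $\alpha$.
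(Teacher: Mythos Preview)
Your proposal is correct and follows essentially the same route as the paper: bound $\|e^{-itA(\xi)}\|$ via the diagonalization furnished by Proposition~\ref{prop. 3.1} (using that the middle factor is unitary), then invoke Strang's Lemma~\ref{lem. 3.1} to absorb the zeroth-order term $B$, and conclude via Parseval/Plancherel. Your explicit observation that the projectors $E_j$, and hence $H$ and $P$, are homogeneous of degree zero is a clean way to pass from the $\mathbb S^{d-1}$ bound in Proposition~\ref{prop. 3.1} to a bound uniform in all $\xi\in\mathbb R^d$; the paper leaves this implicit by simply noting $tA(\xi)=A(t\xi)$ just before Definition~\ref{def. 3.1}.
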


\begin{proof} By Proposition \ref{prop. 3.1}, we have that for all $\xi\in\mathbb{R}^{d}$
and $t\geq0$
\begin{eqnarray*}
\left\Vert e^{itA(\xi)}\right\Vert  & \leq & \left\Vert P^{-1}e^{itD}P\right\Vert \leq C ,
\end{eqnarray*}
 where $D$ is diagonal with real entries, $e^{itD}$ is unitary and
therefore leaves the matrix norm invariant. Hence $L$ is hyperbolic. 

Using \eqref{eq: 3.2}, Parseval's relation and hyperbolicity of $L$ we obtain the
following estimate 
\begin{align*}
 \left\Vert U(t)\right\Vert _{L^{2}\left(\mathbb{R}^{d}\right)} & = \left\Vert e^{-t\left(B+iA(\xi)\right)}\hat{U}_{0}(\xi)\right\Vert _{L^{2}\left(\mathbb{R}^{d}\right)}
\\
& \leq\left\Vert e^{-t\left(B+iA(\xi)\right)}\right\Vert \left\Vert \hat{U}_{0}(\xi)\right\Vert _{L^{2}\left(\mathbb{R}^{d}\right)}\leq Ce^{tC\left\Vert B\right\Vert }\left\Vert U_{0}\right\Vert _{L^{2}\left(\mathbb{R}^{d}\right)} \nonumber
\end{align*}
completing the claim. The last inequality follows from Lemma \ref{lem. 3.1}. Note that since $A$ and $B$ do not commute,
it does not hold that $\left\Vert e^{t\left(A+B\right)}\right\Vert =\left\Vert e^{tA}e^{tB}\right\Vert $
for $t>0$. \end{proof}  \\

\begin{remark}\label{remark 3.1}
1. If $L$ is hyperbolic and $U_{0}\in
H^{s}\left(\mathbb{R}^{d}\right)^{n}$, then application of Gronwall's
inequality shows that there is a continuous solution with values in
$H^{s}$ if one has a variable-coefficient lower order term $B(x)\in
L^{\infty}\left(\mathbb{R}^{d}\right)$.  In this case the Cauchy
problem for $L+B$ is also strongly well-posed.  Hyperbolicity and
well-posedness is a property of $A$ alone.

2. In the notation of Def. \ref{def. 1.1} we have $\lambda_1=-c|\xi|$, $\lambda_2=0$ and $\lambda_3=c|\xi|$ with $l_1=1$, $l_2=kd+d-1$ and $l_3=1$. Operator $L$ is constantly hyperbolic, that is, the symbol $A(\xi)$
is diagonalizable with real eigenvalues and the algebraic multiplicities
of eigenvalues remain constant as $\xi$ ranges along  $\mathbb{S}^{d-1}.$
Strict or constant hyperbolicity implies hyperbolicity\cite{serre2007}. 

In \cite{metivier} M\'{e}tivier provided a few examples of systems satisfying the block structure condition, including the equations of linear elasticity. Our results show that this important class of systems can be enlarged by the generalized Zener model of viscoelasticity.

3. The matrices $A_{j}$ do not commute, i.e. $A_{j}A_{i}\ne A_{i}A_{j}$
for $i\ne j$. Hence they cannot be simultaneously diagonalized and
Eq. \eqref{eq: 3.1} cannot be transformed to a system consisting of $n$ uncoupled
scalar equations. 

 4. Eq. \eqref{eq: 3.1} can be viewed as a linearization of a system with a non-linear source term $Q(U)$ about a
constant state in equilibrium. Typically, the source term is divided by a small parameter that determines the rate of relaxation towards equilibrium. To ensure  the existence of a well-behaved zero relaxation limit, Yong \cite{yong} introduced the so-called stability criterion which necessitates that there is $C(U)>0$ such that \[
\left\Vert e^{\delta Q_U(U)+iA(\xi)}\right\Vert \leq C(U)
\] for all $\delta \geq 0$, $\xi \in\mathbb{R}^d$ with $\{U: Q(U)=0\}\ne \emptyset$. Here, $Q_U(U)$ denotes the Jacobian matrix of the source term. This criterion is somewhat stronger than the hyperbolicity condition and reduces to that when $\delta=0$ (cf. Eq. \eqref{eq: 3.3} and the inequality preceding it). 
\end{remark}

\section{Plane-wave analysis}\label{Sec.4}

Waves at a sufficiently large distance from the source behave locally
like plane waves. This motivates one to study the behavior of plane
waves as possible growth modes in the system under consideration. \\
\begin{theorem}\label{th. 4.1}
Let $A_{j}$, $B$ be constant-coefficient matrices and $d\geq1$.
The eigenvalues of $\Phi(i\xi)=-\left(B+iA(\xi)\right)$ are roots of the characteristic polynomial
\begin{equation}\label{eq: 4.1}
\tilde{p}(\lambda,\xi_{1},\dots,\xi_{d})=p(\lambda,|\xi|)\lambda^{d-1}\prod_{i=1}^{k}(\lambda+b_{i})^{d-1}
\end{equation}
where $p(\lambda,|\xi|)$ is the characteristic polynomial corresponding
to the system derived from the one-dimensional wave equation with
$k$ memory terms and $\xi\in\mathbb{R}$ replaced by $|\xi|=\sqrt{\xi_{1}^{2}+\dots+\xi_{d}^{2}}\in\mathbb{R}$:
\[
p(\lambda,\xi)=(-1)^{k}\left(\lambda^{2}+c^{2}\xi^{2}+\xi^{2}\sum_{i=1}^{k}\frac{a_{i}}{\lambda+b_{i}}\right)\prod_{i=1}^{k}(\lambda+b_{i})
\]
\end{theorem}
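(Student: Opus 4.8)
The plan is to compute the characteristic polynomial $\tilde p(\lambda,\xi)=\det(\lambda I_n-\Phi(i\xi))=\det(\lambda I_n+B+iA(\xi))$ directly from the block form of $A(\xi)$ and $B$ exhibited in Section~\ref{Sec.2}, exploiting the fact that only the $u$-row and $u$-column couple the different vector blocks. First I would write $\Phi(i\xi)$ explicitly: in the basis $(u,v,w_1,\dots,w_k)$ the matrix $\lambda I_n+B+iA(\xi)$ has a scalar $\lambda$ in the $(u,u)$ slot, the block $i\xi^T\!\cdot$ (i.e.\ $i\xi_j$ entries) along the $u$-row from the $v$-block, the block $ic^2\xi$ in the $v$-column from the $u$-slot, $\lambda I_d$ in the $(v,v)$ block, $I_d$ in each $(v,w_i)$ block, $-ia_i\xi$ in the $(w_i,u)$ block, and $(\lambda+b_i)I_d$ in the $(w_i,w_i)$ block, with all remaining blocks zero.

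The key step is a block elimination. Since each diagonal block $(\lambda+b_i)I_d$ is invertible for $\lambda\neq -b_i$, I would use the Schur complement with respect to the $w$-blocks to eliminate the $w_i$ rows, which replaces the $(v,v)$ block $\lambda I_d$ by $\lambda I_d-\sum_i (\lambda+b_i)^{-1}(I_d)(-ia_i\xi)\cdot$(the $w_i$-row), being careful to track how the $w_i$-column (which only hits the $u$-slot via $-ia_i\xi$) and the $w_i$-row (which only hits the $v$-block via $I_d$) combine. After this elimination the surviving $(d+1)\times(d+1)$-in-blocks problem involves only $u$ and $v$, and it is exactly the Fourier symbol of the one-dimensional system with $\xi$ replaced by $|\xi|$, tensored appropriately; carrying out the determinant of that reduced matrix — again by a Schur complement, now eliminating the $d$-dimensional $v$-block against the scalar $u$-slot — produces the scalar factor $\lambda^2+c^2|\xi|^2+|\xi|^2\sum_i a_i/(\lambda+b_i)$ times the accumulated factors $\prod_i(\lambda+b_i)$ from clearing denominators, and times $\lambda^{d-1}\prod_i(\lambda+b_i)^{d-1}$ coming from the directions of $v$ and $w_i$ orthogonal to $\xi$, on which the coupling vanishes. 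The sign $(-1)^k$ is bookkeeping from the $k$ Schur complements.

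Concretely I would organize the computation by splitting $\mathbb{R}^d$ (in each vector block) into the line spanned by $\xi$ and its orthogonal complement: on the $(d-1)$-dimensional orthogonal complement the vectors $v$ and $w_i$ decouple completely from $u$ (since $\xi^T\cdot$ and $\xi\,\cdot$ annihilate and land in that subspace respectively), giving a block-diagonal contribution $\bigl(\lambda \cdot \prod_i(\lambda+b_i)\bigr)^{d-1}=\lambda^{d-1}\prod_i(\lambda+b_i)^{d-1}$; on the remaining $1+1+k$ scalar degrees of freedom (the $u$-component, the $\xi$-component of $v$, and the $\xi$-components of the $w_i$) one is left with precisely the $1$D symbol, whose determinant is $p(\lambda,|\xi|)$ by the stated definition. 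This reduces Theorem~\ref{th. 4.1} to the purely $1$D determinant identity, which is a short direct expansion.

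The main obstacle I anticipate is purely organizational rather than conceptual: keeping the block bookkeeping straight — in particular the placement of the scalar $\lambda$ in the $u$-slot versus the $d\times d$ identity blocks, the asymmetry between the $u$-row couplings ($i\xi_j$ from $v$) and the $u$-column couplings ($ic^2\xi$ into $v$, $-ia_i\xi$ into $w_i$), and correctly propagating the $(\lambda+b_i)$ factors and the sign $(-1)^k$ through the successive Schur complements. A secondary subtlety is that the Schur-complement formula requires $\lambda\neq -b_i$; since $\tilde p$ is a polynomial, the identity then extends to all $\lambda$ by continuity (or one notes $\lambda=-b_i$ is manifestly a root of both sides with the correct multiplicity $d-1$, or $d$ if $b_i$ also makes $p$ vanish).
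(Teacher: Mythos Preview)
Your proposal is correct. Both your approach and the paper's share the same core idea---reduce the $d$-dimensional symbol to the one-dimensional one by a change of basis that isolates the direction of $\xi$, after which $d-1$ decoupled blocks contribute the factor $\lambda^{d-1}\prod_i(\lambda+b_i)^{d-1}$---but the implementations differ. The paper conjugates $\lambda I-\Phi(i\xi)$ by an explicit non-orthogonal block-diagonal matrix $S$ whose $d\times d$ blocks $\Xi_j$ are upper triangular with first row $\xi^T$, and then performs cofactor expansion along the resulting sparse columns. You instead use the orthogonal splitting $\mathbb{R}^d=\operatorname{span}(\xi)\oplus\xi^{\perp}$ in each vector block (equivalently, successive Schur complements), which is geometrically more transparent and avoids writing down $S$. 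Your route makes the mechanism clearer and is coordinate-free; the paper's is more concrete and computational. One small point to make explicit in your write-up: on $\xi^{\perp}$ the components of $v$ and $w_i$ decouple from $u$ but not from \emph{each other}, since the $(v,w_i)$ block $I_d$ survives; however the resulting $(k+1)\times(k+1)$ block is upper triangular, so its determinant is still $\lambda\prod_i(\lambda+b_i)$, exactly as you claim.
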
 

 \begin{proof}
Perform the following similarity transformation: pre-multiply  $\lambda I_{kd +d +1} - \Phi(i\xi)$ by the block diagonal matrix 
\[S=\left(\begin{array}{ccccc}
|\xi| & 0 & \dots & \dots & 0\\
0 & \Xi_1 & 0 & \dots & 0\\
0 & 0 & \ddots & \ddots & \vdots\\
\vdots & \vdots & \ddots & \ddots & 0\\
0 & 0 & \dots & 0 & \Xi_{k+1}
\end{array}\right),\ \mbox{ where } \ \Xi_j=\left(\begin{array}{ccccc}
\xi_{1} & \xi_{2} & \xi_{3} & \dots & \xi_{d}\\
0 & \xi_{2} & \xi_{3} & \dots & \xi_{d}\\
0 & 0 & \ddots & \ddots & \vdots\\
\vdots & \vdots & \ddots & \ddots & \xi_{d}\\
0 & 0 & \dots & 0 & \xi_{d}
\end{array}\right)\]
are identical for all $1\leq j \leq k+1,$ and post-multiply by its inverse $S^{-1}$. Successively develop the resulting determinant by the columns containing a single non-zero element thereby accounting for the factor $\lambda^{d-1}\prod_{i=1}^{k}(\lambda+b_{i})^{d-1}$ in Eq. \eqref{eq: 4.1}. \end{proof} \\

Without loss of generality, we assume that $0<b_{1}<b_{2}<\dots<b_{k}$
and consider $g(\lambda)=(-1)^{k}p(\lambda,\xi)$. Since the
characteristic polynomial in higher dimensions splits as in \eqref{eq: 4.1},
it suffices to analyze the roots of $p(\lambda,\xi)$. \\

\begin{proposition}[All $a$'s are negative]\label{prop. 4.1} Let $d=1$, $\xi\ne0$
and $a_{i}<0$ for all $1\leq i\leq k$, then i) if $g(0)>0$,
all eigenvalues of $\Phi(i\xi)$ have negative real parts; ii) if
$g(0)=0$, then one eigenvalue is zero and the rest have negative
real parts; iii) if $g(0)<0$, then only one eigenvalue of $\Phi(i\xi)$
is positive and all other eigenvalues have negative real parts. 
\end{proposition}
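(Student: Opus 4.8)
The plan is to locate all roots of the real, monic, degree-$(k+2)$ polynomial $g(\lambda)=(-1)^{k}p(\lambda,\xi)$, which by Theorem \ref{th. 4.1} with $d=1$ are exactly the eigenvalues of $\Phi(i\xi)$. Write $g(\lambda)=r(\lambda)\prod_{i=1}^{k}(\lambda+b_{i})$ with $r(\lambda)=\lambda^{2}+c^{2}\xi^{2}+\xi^{2}\sum_{i=1}^{k}a_{i}/(\lambda+b_{i})$; then $g(0)=r(0)\prod_{i}b_{i}=\xi^{2}\big(c^{2}+\sum_{i}a_{i}/b_{i}\big)\prod_{i}b_{i}$, so the three hypotheses on the sign of $g(0)$ are exactly sign conditions on $c^{2}+\sum_{i}a_{i}/b_{i}$. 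To count the roots of $g$ in the open right half-plane I would apply the argument principle --- equivalently, a form of the Routh-Hurwitz criterion --- to the contour consisting of the imaginary axis together with a large semicircle in $\{\operatorname{Re}\lambda>0\}$.

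The single place where the hypothesis $a_{i}<0$ is used is the identity $\operatorname{Im}r(i\omega)=-\omega\,\xi^{2}\sum_{i}a_{i}/(\omega^{2}+b_{i}^{2})$, which is strictly positive for every $\omega>0$. From this: (a) $r(i\omega)\neq0$ for all real $\omega\neq0$, hence $g(i\omega)\neq0$ for $\omega\neq0$ irrespective of the sign of $c^{2}+\sum_{i}a_{i}/b_{i}$, so no eigenvalue lies on the imaginary axis except possibly $\lambda=0$; and (b) as $\omega$ runs over $(0,\infty)$ the curve $\omega\mapsto r(i\omega)$ stays in the open upper half-plane, issuing from the real point $r(0)$ and tending to infinity just above the negative real axis (since $\operatorname{Re}r(i\omega)\to-\infty$ while $\operatorname{Im}r(i\omega)\to0^{+}$). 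Hence the net increment of $\arg r(i\omega)$ over $(0,\infty)$ is $\pi$ if $r(0)>0$ and $0$ if $r(0)<0$; each factor $i\omega+b_{i}$ adds an increment $\pi/2$; and, combining with the $(k+2)\pi$ swept on the large semicircle (where $g(\lambda)\sim\lambda^{k+2}$) and the symmetry $g(\overline{\lambda})=\overline{g(\lambda)}$, the argument principle gives the number $N_{+}$ of right half-plane roots as
\[
N_{+}=\frac{k+2}{2}-\frac{1}{\pi}\big(\text{increment of }\arg g(i\omega)\text{ over }(0,\infty)\big)=\begin{cases}0,&g(0)>0,\\ 1,&g(0)<0.\end{cases}
\]
Case (i) is then immediate. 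For case (iii): since non-real roots occur in conjugate pairs with equal real parts, a single root in the open right half-plane must be real, hence a positive eigenvalue, and by (a) (together with $g(0)\neq0$) the other $k+1$ roots have negative real part.

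For the borderline case (ii) I would treat $g(0)=0$ as a limit of case (i). Here $\lambda=0$ is a root, and it is simple since differentiating and using $c^{2}+\sum_{i}a_{i}/b_{i}=0$ gives $g'(0)=r'(0)\prod_{i}b_{i}=-\xi^{2}\big(\prod_{i}b_{i}\big)\sum_{i}a_{i}/b_{i}^{2}>0$. Perturbing $c^{2}\mapsto c^{2}+t$ with $t>0$ small puts us in case (i), so all $k+2$ roots of the perturbed polynomial lie in the open left half-plane; as $t\to0^{+}$ they vary continuously, by (a) --- whose estimate does not involve $c^{2}$ --- none can cross the imaginary axis away from the origin, and the perturbed value at the origin, $\xi^{2}t\prod_{i}b_{i}$, vanishes only at $t=0$, so exactly one root tends to $0$ while the other $k+1$ remain in the open left half-plane. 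This is (ii). (Alternatively one can run the argument principle directly on a contour indented by a small right semicircle about the origin.)

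The main obstacle is the argument-principle bookkeeping: one must verify carefully that $r(i\omega)$ never returns to the real axis for $\omega\in(0,\infty)$, so that $\arg r(i\omega)$ cannot wind, and identify its limiting direction as $\omega\to\infty$; together with the two sign computations for $g(0)$ and $g'(0)$ this carries essentially all the content, the remainder being routine. One should also check that $N_{+}$ comes out an integer for both parities of $k$ --- it does, the half-integer contributions from the $k$ linear factors and from $\tfrac{k+2}{2}$ cancelling.
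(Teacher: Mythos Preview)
Your argument is correct, but it is genuinely different from the paper's. The paper never touches the imaginary axis: instead it works entirely on the real line. From the sign of the residues $\xi^{2}a_{i}$ at the simple poles $-b_{i}$ of $g(\lambda)/\prod_{i}(\lambda+b_{i})$ one gets, by the Intermediate Value Theorem, $k-1$ real roots strictly interlacing the $-b_{i}$'s, and in case (i) one further real root in $(-b_{1},0)$ (respectively in case (ii) the root $\lambda=0$, in case (iii) nothing extra). What remains is a real quadratic (cases (i),(ii)) or cubic (case (iii)) factor, and Vieta's formulas for the sum and product of its roots --- read off from the coefficients of $g$ and the already located real roots --- pin down the sign of the real parts of the remaining two or three roots.

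Your argument-principle route is more analytic and arguably more robust: the single observation $\operatorname{Im}r(i\omega)>0$ for $\omega>0$ simultaneously rules out imaginary roots and controls the winding, and the count $N_{+}$ falls out without any case-by-case factoring. It also dovetails naturally with the Routh--Hurwitz machinery the paper invokes later (Proposition~\ref{prop. 4.3}). What the paper's approach buys in exchange is finer structural information --- the $k-1$ (or $k$) negative real roots interlacing the $-b_{i}$ --- which your method does not see, and a completely elementary toolkit (IVT and Vieta) in place of contour bookkeeping. Your perturbation treatment of case (ii) is fine; the paper handles it directly by factoring out $\lambda$ and checking $g'(0)>0$ via the same computation you give.
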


\begin{proof} If $\xi=0$, the eigenvalues are $-b_{i}<0$ and 0
with multiplicity 2. 

If $\xi\ne0$, the function $g(\lambda)/\prod_{i=1}^{k}(\lambda+b_{i})$
has $k$ simple poles at $-b_{i}$ and since all $a_{i}$'s have the
same sign, there are $k-1$ real roots $r_{i}$ of $p(\lambda,\xi)$
between these poles. 

i) Assume that $g(0)>0$, then a further real root lies between $0$
and $-b_{1}$, as follows from 
\[
g(-b_{1})g(0)=\xi^{2}a_{1}\prod_{i\ne1}(b_{i}-b_{1})g(0)<0
\]
and the Intermediate value theorem. The function $g(\lambda)$ can
now be factored as 
\[
(\lambda^{2}+\alpha\lambda+\beta)\prod_{i=1}^{k}(\lambda-r_{i})=0.
\]
The coefficients $\alpha,\beta$ are real since $r_{i}$'s are real
for all $1\leq i\leq k$. Denote by $r_{k+1}$ and $r_{k+2}$ the
two roots (real or complex conjugate) of $\lambda^{2}+\alpha\lambda+\beta$,
then by Vieta's theorem, 
\begin{align*}
& r_{k+2}+r_{k+1} = -\sum_{i=1}^{k}b_{i}-\sum_{i=1}^{k}r_{i}<0 ,\\ 
 & r_{k+2}\cdot r_{k+1} = g(0)/\prod_{i=1}^{k}|r_{i}|>0 .
\end{align*}
If $r_{k+1}$ and $r_{k+2}$ are complex conjugate, then $\Re(r_{k+1})=\Re(r_{k+2})<0$.
If $r_{k+1}$ and $r_{k+2}$ are real, then $r_{k+1}<0$ and $r_{k+2}<0$.
The same result was obtained  in\cite{markowich}.

ii) If $g(0)=0$, then in addition to a real root $r_{i}$ between
each consecutive $-b_{i}$'s, there is a zero eigenvalue since the
constant term in $p(\lambda,\xi)$ is absent and therefore one can
factor out $\lambda$:
\[
\lambda(\lambda^{2}+\alpha\lambda+\beta)\prod_{i=1}^{k-1}(\lambda-r_{i})=0.
\]
By Vieta's theorem 
\begin{align*}
 & r_{k+2}+r_{k+1} = -\sum_{i=1}^{k}b_{i}-\sum_{i=1}^{k-1}r_{i}<0,\\
 & r_{k+2}\cdot r_{k+1} = g'(0)/\prod_{i=1}^{k-1}|r_{i}|=-\xi^{2}\left(\sum_{i=1}^{k}\frac{a_{i}}{b_{i}^{2}}\right)\prod_{i=1}^{k}b_{i}/\prod_{i=1}^{k-1}|r_{i}|>0
\end{align*}
where $r_{k+1}$ and $r_{k+2}$ are roots (real or complex conjugate)
of $\lambda^{2}+\alpha\lambda+\beta$ and we used $g(0)=0$ in the
expression for $g'(0)$. As in case i) above, $r_{k+1}$ and $r_{k+2}$
have negative real parts. 

iii) Now assume that $g(0)<0$. Since there are $k-1$ real roots
$r_{i}$ between $k$ simple poles $-b_{i}$, $g(\lambda)$ can be
written as 
\[
\left(\lambda^{3}+\alpha\lambda^{2}+\beta\lambda+\gamma\right)\prod_{i=1}^{k-1}(\lambda-r_{i})=0
\]
By Vieta's theorem
\begin{align}
 & r_{k+1}+r_{k+2}+r_{k} = -\sum_{i=1}^{k}b_{i}-\sum_{i=1}^{k-1}r_{i}<0, \nonumber \\
 & r_{k+1}\cdot r_{k+2}\cdot r_{k} = (-1)^{k+2}g(0)/\prod_{i=1}^{k-1}r_{i}=-g(0)/\prod_{i=1}^{k-1}|r_{i}|>0 \label{eq: 4.2} 
\end{align}
where $r_{k},r_{k+1},r_{k+2}$ are roots of the cubic equation. An
algebraic equation of an odd degree and real coefficients must posses
at least one real root. Eq. \eqref{eq: 4.2} implies that this root is positive.
The other two roots of the cubic equation have negative real parts.
\end{proof} \\

\begin{proposition}[All $a$'s are positive]\label{prop. 4.2} Let $d=1$, $\xi\ne0$
and $a_{i}>0$ for all $1\leq i\leq k$, then two eigenvalues
of $\Phi(i\xi)$ have positive real parts and the others are real and
negative. 
\end{proposition}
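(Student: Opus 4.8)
The plan is to run the same kind of argument as in Proposition \ref{prop. 4.1}, but now the sign condition $a_i>0$ forces a rigid root configuration, so no case distinction is needed. For $d=1$ Theorem \ref{th. 4.1} gives $\tilde p(\lambda,\xi)=p(\lambda,\xi)$, hence the eigenvalues of $\Phi(i\xi)$ are exactly the roots of $g(\lambda)=(-1)^{k}p(\lambda,\xi)$. Write $g(\lambda)=h(\lambda)\prod_{i=1}^{k}(\lambda+b_{i})$ with
\[
h(\lambda)=\lambda^{2}+c^{2}\xi^{2}+\xi^{2}\sum_{i=1}^{k}\frac{a_{i}}{\lambda+b_{i}},
\]
a rational function with simple poles exactly at $-b_{k}<\dots<-b_{1}$; since $g(-b_{i})=\xi^{2}a_{i}\prod_{j\ne i}(b_{j}-b_{i})\ne0$, a real number distinct from the $-b_{i}$ is a root of $g$ if and only if it is a zero of $h$.

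First I would do the bookkeeping of the real roots. (a) For $\lambda>-b_{1}$ every term $a_{i}/(\lambda+b_{i})$ is positive (here we use $\xi\ne0$, $c^{2}>0$ and $a_{i}>0$), so $h(\lambda)\ge c^{2}\xi^{2}>0$ and $g$ has no real root $\ge-b_{1}$, in particular none with $\lambda\ge0$. (b) The decisive point is that $a_{i}>0$ makes
\[
h'(\lambda)=2\lambda-\xi^{2}\sum_{i=1}^{k}\frac{a_{i}}{(\lambda+b_{i})^{2}}<0\qquad\text{for every }\lambda<0,
\]
so $h$ is strictly decreasing on each of the $k$ intervals $(-\infty,-b_{k})$ and $(-b_{i+1},-b_{i})$, $i=1,\dots,k-1$. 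On $(-b_{i+1},-b_{i})$ one has $h\to+\infty$ as $\lambda\to-b_{i+1}^{+}$ and $h\to-\infty$ as $\lambda\to-b_{i}^{-}$, hence a unique simple zero $r_{i}\in(-b_{i+1},-b_{i})$; and on $(-\infty,-b_{k})$, $h$ runs monotonically from $+\infty$ down to $-\infty$, giving one more simple zero $r_{k}<-b_{k}$. Therefore $g$, which has degree $k+2$, has exactly the $k$ real roots $r_{1},\dots,r_{k}$, all strictly negative with $r_{i}<-b_{i}$, and the remaining two roots form a complex-conjugate pair $z,\bar z$.

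It then remains to locate $z$. By Vieta's theorem the sum of all $k+2$ roots of $g$ equals minus the coefficient of $\lambda^{k+1}$, and that coefficient is $\sum_{i=1}^{k}b_{i}$, because the memory correction $\xi^{2}\sum_{i}a_{i}\prod_{j\ne i}(\lambda+b_{j})$ has degree only $k-1$ while the top part of $g$ is $(\lambda^{2}+c^{2}\xi^{2})\prod_{i}(\lambda+b_{i})$. Hence
\[
z+\bar z=-\sum_{i=1}^{k}b_{i}-\sum_{i=1}^{k}r_{i},
\]
and since $r_{i}<-b_{i}$ for each $i$ we get $-\sum_{i}r_{i}>\sum_{i}b_{i}$, so $2\,\Re z=z+\bar z>0$. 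Thus two eigenvalues of $\Phi(i\xi)$ (the pair $z,\bar z$) have positive real part while the remaining $k$ are real and negative, which is the assertion.

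The only genuine obstacle is step (b): it is precisely the hypothesis $a_{i}>0$ — in contrast with the mixed-sign setting of Proposition \ref{prop. 4.1} — that makes $h'<0$ on the negative axis, and without this monotonicity one could not exclude extra real roots inside the gaps, so the exact count of real versus complex roots, and with it the conclusion, would break down. Checking that the $r_{i}$ are simple (so that exactly two roots are left over for the conjugate pair), computing the $\lambda^{k+1}$-coefficient, and verifying the final inequality $-\sum_{i}r_{i}>\sum_{i}b_{i}$ are all routine.
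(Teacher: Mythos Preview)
Your proof is correct and follows the same skeleton as the paper's: locate $k$ negative real roots of $g$ interlacing the poles $-b_{i}$ (with one more to the left of $-b_{k}$), then use Vieta's relations to control the remaining two roots. The technical details differ slightly. The paper finds the $k$ roots by sign changes alone and then argues that the remaining pair is complex conjugate by combining Vieta's sum and product relations with the observation that $g(\lambda)>0$ for all $\lambda\ge 0$, which rules out two positive reals. You instead note that $h'(\lambda)=2\lambda-\xi^{2}\sum_{i}a_{i}/(\lambda+b_{i})^{2}<0$ on the whole negative axis, which yields the \emph{exact} count of real roots directly together with the sharp localization $r_{i}<-b_{i}$; from that only Vieta's sum relation is needed to conclude $\Re z>0$. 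Your monotonicity step makes the real-root counting airtight (the paper's Intermediate Value argument gives existence but not uniqueness in each gap), while the paper's route avoids the derivative computation at the cost of invoking both Vieta relations and the positivity of $g$ on $[0,\infty)$.
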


\begin{proof} If $\xi=0$, the eigenvalues are $-b_{i}<0$ and zero
(two-fold). 

If $\xi\ne0$, then the $k-1$ real roots $r_{i}$ of $p(\lambda,\xi)$
strictly interlace $-b_{i}$ for $1\leq i\leq k$. By the Intermediate
value theorem there is also a root to the left of $-b_{k}=-\max_{i}b_{i}$
since $\lim_{\lambda\to-\infty}p(\lambda,\xi)=+\infty$ and $p(-b_{k},\xi)=-\xi^{2}a_{k}\prod_{i\ne k}(b_{k}-b_{i})<0$.
Thus for some real $\alpha,\beta$ we have the factorization 
\[
(\lambda^{2}+\alpha\lambda+\beta)\prod_{i=1}^{k}(\lambda-r_{i})=0.
\]
By Vieta's theorem the roots $r_{k+1},r_{k+2}$ of $\lambda^{2}+\alpha\lambda+\beta$
satisfy 
\begin{align*}
& r_{k+2}+r_{k+1} = -\sum_{i=1}^{k}b_{i}-\sum_{i=1}^{k}r_{i}>0,\\
 & r_{k+2}\cdot r_{k+1} = g(0)/\prod_{i=1}^{k}|r_{i}|>0
\end{align*}
where $g(0)>0$, since $a_{i}>0$ for all $1\leq i\leq k$. In fact,
$g(\lambda)>0$ holds for $\lambda\geq0$, so $r_{k+1}$ and $r_{k+2}$
cannot be real and positive. Hence they are complex conjugate with
positive real parts. \end{proof} \\

For each $1\leq i\leq k-1$, the signs of $a_{i}$ and $a_{i+1}$
determine the number of jumps of the rational function $g(\lambda)/\prod_{i=1}^{k}(\lambda+b_{i})$
from $\pm\infty$ to $\mp\infty$ as the argument changes from $-b_{i+1}$
to $-b_{i}$. If the signs of $a_{i}$ and $a_{i+1}$ are the same
as in Propositions \ref{prop. 4.1} and \ref{prop. 4.2}, then there is a real root between $-b_{i+1}$
and $-b_{i}$ corresponding to a jump from $\pm\infty$ to $\mp\infty$.
This greatly simplifies the problem of root location which becomes
increasingly complicated if the signs of the $a_{i}$'s are arbitrary,
as can already be seen from the simplest example with $k=2$. In this
case, the characteristic polynomial takes the form \\ 
\begin{align*}
g(\lambda)=p(\lambda,\xi)&=\lambda^{4}+(b_{1}+b_{2})\lambda^{3}+(c^{2}\xi^{2}+b_{1}b_{2})\lambda^{2}
\\
&+\xi^{2}(a_{1}+c^{2}b_{1}+a_{2}+c^{2}b_{2})\lambda+\xi^{2}(c^{2}b_{1}b_{2}+a_{1}b_{2}+a_{2}b_{1})\nonumber
\end{align*}

Let $\Delta_{i}$ denote the $i$th Hurwitz determinant obtained from
the coefficients of the characteristic equation, so that
\begin{eqnarray*}
 &  & \Delta_{1}=b_{1}+b_{2},\ \Delta_{2}=\left|\begin{array}{cc}
b_{1}+b_{2} & g'(0)\\
1 & c^{2}\xi^{2}+b_{1}b_{2}
\end{array}\right|,\ \Delta_{3}=\left|\begin{array}{ccc}
b_{1}+b_{2} & g'(0) & 0\\
1 & c^{2}\xi^{2}+b_{1}b_{2} & g(0)\\
0 & b_{1}+b_{2} & g'(0)
\end{array}\right|,\\
 &  & \Delta_{4}=\left|\begin{array}{cccc}
b_{1}+b_{2} & g'(0) & 0 & 0\\
1 & c^{2}\xi^{2}+b_{1}b_{2} & g(0) & 0\\
0 & b_{1}+b_{2} & g'(0) & 0\\
0 & 1 & c^{2}\xi^{2}+b_{1}b_{2} & g(0)
\end{array}\right|=g(0)\Delta_{3}.
\end{eqnarray*}
\\
\begin{proposition}\label{prop. 4.3} Let $d=1$, $\xi\ne0$ and $a_{1}a_{2}<0$,
then \begin{itemize} \item i) when $g(0)>0$ all eigenvalues have negative real parts if
$\Delta_{2}>0$ and $\Delta_{3}>0$, otherwise two roots have negative
real parts and two roots have non-negative real parts; \item ii) a) when
$g(0)=0$ and $g'(0)>0$, one root is zero and three have negative
real parts if $\Delta_{2}>0$, otherwise there is one zero and one
negative root and two roots with non-negative real parts; b) when
$g(0)=0$ and $g'(0)=0$, two roots with negative real parts and zero
(two-fold); c) when $g(0)=0$ and $g'(0)<0$, there is a zero and
a positive root and two roots with negative real parts; \item iii) when
$g(0)<0$, one root is negative and three roots have positive real
parts if $\Delta_{2}<0$ and $\Delta_{3}<0$, otherwise one root is
positive and three roots have non-positive real parts. \end{itemize}
\end{proposition}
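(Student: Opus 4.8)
The plan is to apply the Routh–Hurwitz criterion to the quartic $g(\lambda)$, exactly as the setup preceding the statement suggests, but now handling the sign-indefinite case $a_1 a_2 < 0$ where the interlacing argument of Propositions \ref{prop. 4.1} and \ref{prop. 4.2} breaks down. Recall that for a real quartic $\lambda^4 + c_3\lambda^3 + c_2\lambda^2 + c_1\lambda + c_0$ with $c_3 = \Delta_1 > 0$ (which holds here since $b_1 + b_2 > 0$), the number of roots in the open right half-plane is governed by the sign changes in the Hurwitz sequence $1, \Delta_1, \Delta_2, \Delta_3, \Delta_4$, and since $\Delta_4 = g(0)\Delta_3$ we only need to track $\Delta_2$, $\Delta_3$, and $g(0) = c_0$. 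The key structural fact is that $c_0 = g(0)$ and $c_1 = g'(0)$ in the displayed quartic, so the boundary cases $g(0) = 0$ (a root at the origin) and $g'(0) = 0$ (a double root at the origin when $g(0)=0$ as well) are detected algebraically and must be peeled off before invoking Routh–Hurwitz, which requires $c_0 \neq 0$.

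First I would dispose of the generic cases i) and iii): when $g(0) \neq 0$, apply the Routh–Hurwitz table directly. When $g(0) > 0$, the sequence $1, \Delta_1, \Delta_2, \Delta_3, \Delta_4 = g(0)\Delta_3$ has no sign change (hence all four roots in the left half-plane) precisely when $\Delta_2 > 0$ and $\Delta_3 > 0$; any other sign pattern forces exactly two sign changes — I would check case by case that with $\Delta_1 > 0$ and $g(0) > 0$ the only alternative to zero sign changes is two (a single change is impossible because the sequence starts and — via $\Delta_4 = g(0)\Delta_3$ — effectively ends with matching signs when $\Delta_3$ has the "right" sign, and the parity works out), giving two roots with non-negative real parts and two with negative. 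The case $g(0) < 0$ is the mirror image: now $\Delta_4 = g(0)\Delta_3$ has sign opposite to $\Delta_3$, and one computes that three sign changes (three right-half-plane roots, one real negative root — real because a cubic-type remaining factor after... more precisely, $g(0)<0$ forces a sign pattern on $g$ at $0$ versus $+\infty$ guaranteeing a negative real root) occur exactly when $\Delta_2 < 0$ and $\Delta_3 < 0$; otherwise one gets one sign change, i.e. one right-half-plane root, and the assertion "one root positive and three with non-positive real parts" follows (with the positive root again real by a sign-change-of-$g$ argument on the negative real axis, or by the product-of-roots sign from Vieta).

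For part ii), where $g(0) = 0$, factor $g(\lambda) = \lambda\, h(\lambda)$ with $h$ a cubic whose constant term is $g'(0)$. Sub-case b) $g'(0) = 0$: then $g(\lambda) = \lambda^2 q(\lambda)$ with $q$ quadratic, $q(\lambda) = \lambda^2 + (b_1+b_2)\lambda + (c^2\xi^2 + b_1 b_2)$, whose coefficients are all positive, so both its roots have negative real parts — this gives "zero two-fold and two roots with negative real parts." Sub-cases a) and c): apply Routh–Hurwitz (or the interlacing/Vieta bookkeeping already rehearsed in the proof of Proposition \ref{prop. 4.1}ii) to the cubic $h$. The sign of $g'(0)$ controls the sign of $h$'s constant term, hence — together with $\Delta_1 > 0$ and the reduced Hurwitz determinant of the cubic, which one identifies with (a positive multiple of) $\Delta_2$ — whether $h$ has all three roots in the left half-plane ($\Delta_2 > 0$, case a first alternative), or one negative real root and two in the right half-plane ($\Delta_2 < 0$, case a second alternative), or, when $g'(0) < 0$ so that the constant term of $h$ has the sign forcing a positive real root, one positive root and two with negative real parts (case c). I would double-check the translation between the Hurwitz determinants of the depressed cubic $h$ and the quantity called $\Delta_2$ in the statement, using $c_0 = 0$ to collapse the $4\times 4$ Hurwitz array.

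The main obstacle I anticipate is the careful sign bookkeeping in the Routh–Hurwitz table when the determinants vanish or change sign in a coordinated way — in particular verifying that the intermediate patterns genuinely produce \emph{exactly} the stated counts (two, or one, or three right-half-plane roots) rather than some other number, and correctly attributing which of those roots is real (the claims "one root is negative," "one root is positive") as opposed to merely having the stated sign of real part. These realness claims are not pure Routh–Hurwitz output; they come from evaluating $g$ at $0$, at $-\infty$, and at $+\infty$ and invoking the Intermediate Value Theorem, so I would interleave those elementary sign evaluations with the Hurwitz analysis exactly as done in the earlier propositions.
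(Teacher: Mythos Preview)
Your plan is essentially the paper's proof: Routh--Hurwitz on the quartic when $g(0)\ne 0$, factoring off $\lambda$ (respectively $\lambda^2$) when $g(0)=0$ (respectively also $g'(0)=0$), and Vieta together with the Intermediate Value Theorem for the realness assertions. The one ingredient you anticipate but do not supply is the singular case $\Delta_3=0$ (equivalently $\Delta_4=0$): the paper resolves it via the explicit factorization $g(\lambda)=\bigl(\lambda^2+g'(0)/(b_1+b_2)\bigr)\bigl(\lambda^2+(b_1+b_2)\lambda+\Delta_2/(b_1+b_2)\bigr)$, which produces the purely imaginary pair responsible for the ``non-negative'' and ``non-positive'' wording; note also that the Routh count is taken over the ratio sequence $\{1,\Delta_1,\Delta_2/\Delta_1,\Delta_3/\Delta_2,\Delta_4/\Delta_3\}$ rather than the $\Delta_i$ themselves.
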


\begin{proof} If $\xi=0$, the eigenvalues are $-b_{1},-b_{2}$
and zero (two-fold). 

i) Let $\xi\ne0$ and assume $g(0)>0$. If $\Delta_{4}\ne0$, according
to the Routh-Hurwitz theorem\cite{gantmacher1959} the number of roots of $g(\lambda)$
in the right half-plane $\Re(\lambda)>0$ is determined by the number
of variations of sign in the sequence 
\[
\left\{ 1,\Delta_{1},\frac{\Delta_{2}}{\Delta_{1}},\frac{\Delta_{3}}{\Delta_{2}},\frac{\Delta_{4}}{\Delta_{3}}\right\} =\left\{ 1,b_{1}+b_{2},\frac{\Delta_{2}}{b_{1}+b_{2}},\frac{\Delta_{3}}{\Delta_{2}},g(0)\right\} 
\]
Hence all the roots of $g(\lambda)$ have negative real parts if and
only if $\Delta_{2}>0$ and $\Delta_{3}>0$. As long as $\Delta_{4}\ne0$,
in all other cases including the singular case $\Delta_{2}=0$ there
are exactly two variations of sign and therefore two roots with positive
real parts, say $r_{1}$ and $r_{2}$. For some real $\alpha,\beta$
we can write 
\[
(\lambda^{2}+\alpha\lambda+\beta)\prod_{i=1}^{2}(\lambda-r_{i})=0.
\]
By Vieta's theorem the roots $r_{3},r_{4}$ of $\lambda^{2}+\alpha\lambda+\beta$
satisfy 
\begin{align*}
& r_{3}+r_{4} = -(b_{1}+b_{2}+r_{1}+r_{2})<0,\\
 & r_{3}\cdot r_{4} = g(0)/r_{1}r_{2}>0
\end{align*}
i.e. $r_{3}$ and $r_{4}$ have negative real parts. Moreover, if $a_{1}<0$
and $a_{2}>0$, these roots are real: one root lies between $-b_{1}$
and $0$ since $g(-b_{1})g(0)=\xi^{2}a_{1}(b_{2}-b_{1})g(0)<0$ and the other
is to the left of $-b_{2}$ because $g(-b_{2})=-\xi^{2}a_{2}(b_{2}-b_{1})<0$
and $\lim_{\lambda\to-\infty}g(\lambda)=+\infty$. 

If $\Delta_{4}=0$, then $\Delta_{3}=0=g'(0)\Delta_{2}-g(0)(b_{1}+b_{2})^{2}$.
Evaluating $\Delta_{2}$ from the latter equation and comparing it
with the original definition of $\Delta_{2}$, we can conclude that
$\Delta_{2}>0$ and $g'(0)>0$. In this case the polynomial enjoys
the following factorization 
\[
\left(\lambda^{2}+\frac{g'(0)}{b_{1}+b_{2}}\right)\left(\lambda^{2}+(b_{1}+b_{2})\lambda+\frac{\Delta_{2}}{b_{1}+b_{2}}\right)=0
\]
 Hence there is a pair of conjugate pure imaginary roots $\pm i\sqrt{g'(0)/(b_{1}+b_{2})}$
and two roots with negative real parts. 

ii) Assume $g(0)=0$ and $\xi\ne0$. Since the constant term is absent,
we can factor out $\lambda$ and reduce the problem of root location
for $g(\lambda)$ to that for which the last Hurwitz determinant is
$\Delta_{3}=g'(0)\Delta_{2}$. a) When $g'(0)>0$ the sequence $\left\{ 1,b_{1}+b_{2},\frac{\Delta_{2}}{b_{1}+b_{2}},g'(0)\right\} $
has no sign variation if $\Delta_{2}>0$ and hence no roots of $\lambda^{3}+(b_{1}+b_{2})\lambda^{2}+(c^{2}\xi^{2}+b_{1}b_{2})\lambda+g'(0)=0$
are in the right-half plane. If $\Delta_{2}<0$, there are two sign
variations and hence two roots with positive real parts and one negative
root. If $\Delta_{2}=0$, the roots are $-(b_{1}+b_{2})$ and $\pm i\sqrt{g'(0)/(b_{1}+b_{2})}$;
b) When $g'(0)=0$, one can factor out $\lambda$ again and obtain
a quadratic equation whose roots have negative real parts; c) When
$g'(0)<0$, then $\Delta_{2}>0$. There is one positive root,
say $r_{1}$, corresponding to a single sign variation in the sequence
$\left\{ 1,b_{1}+b_{2},\frac{\Delta_{2}}{b_{1}+b_{2}},g'(0)\right\} $.
By Vieta's theorem the remaining two roots satisfy 
\begin{align*}
& r_{2}+r_{3} = -(b_{1}+b_{2}+r_{1})<0,\\
& r_{2}\cdot r_{3} = -g'(0)/r_{1}>0
\end{align*}
i.e. $r_{2}$ and $r_{3}$ have negative real parts. 

iii) Assume that $g(0)<0$ and $\xi\ne0$. If $\Delta_{4}\ne0$, $\Delta_{2}<0$
and $\Delta_{3}<0$ by the Routh-Hurwitz theorem there are three roots
in the right-half plane. Since in this case we also have $g'(0)=\frac{\Delta_{3}+g(0)(b_{1}+b_{2})^{2}}{\Delta_{2}}>0$,
it follows from the Descartes' rule of signs that only one of those
three roots is real. In other cases where $\Delta_{4}\ne0$, including
the singular case $\Delta_{2}=0$, there is only one variation of
sign in the sequence $\left\{ 1,\Delta_{1},\frac{\Delta_{2}}{\Delta_{1}},\frac{\Delta_{3}}{\Delta_{2}},\frac{\Delta_{4}}{\Delta_{3}}\right\} $
and hence only one root in the right-half plane. 

If $\Delta_{4}=0$, then $\Delta_{3}=0$ and $g'(0)\Delta_{2}<0$.
From the factorization 
\[
\left(\lambda^{2}+\frac{g'(0)}{b_{1}+b_{2}}\right)\left(\lambda^{2}+(b_{1}+b_{2})\lambda+\frac{\Delta_{2}}{b_{1}+b_{2}}\right)=0
\]
we conclude that there is a pair of conjugate pure imaginary roots
$\pm i\sqrt{g'(0)/(b_{1}+b_{2})}$ and a pair of real roots of opposite
sign if $g'(0)>0$ and $\Delta_{2}<0$. If $g'(0)<0$ and $\Delta_{2}>0$
we have $\pm\sqrt{|g'(0)|/(b_{1}+b_{2})}$ and two roots with negative
real parts. \end{proof} \\

Proposition \ref{prop. 4.3} exhausts all the possibilities for the fourth order
monic polynomial. When more than two $a_{i}$'s have arbitrary signs,
eigenvalues can be studied in a similar manner using higher order
Hurwitz determinants even if some of those determinants vanish. \\

\begin{remark}\label{remark 4.1}
1. Requiring all the $a_{i}$'s to be negative is equivalent to saying
that the relaxation kernel $K(t)=-\sum_{i=1}^{k}a_{i}e^{-b_{i}t}$ is a
totally monotone function.

2. Recall that when deriving the model equation we identified
$a_{i},b_{i}$ and $c^{2}$ with the physical parameters of the system, namely, 
$b_{i}=\tau_{\sigma i}^{-1}>0$,
$c^{2}=\sum_{i=1}^{k}M_{Ri}\tau_{\epsilon i}\tau_{\sigma i}^{-1}>0$
and $a_{i}=M_{Ri}(1-\tau_{\epsilon i}\tau_{\sigma i}^{-1})b_{i}<0$.
Since
\[
g(0)=\xi^{2}\left(c^{2}+\sum_{i=1}^{k}\frac{a_{i}}{b_{i}}\right)\prod_{i=1}^{k}b_{i}=\xi^{2}\prod_{i=1}^{k}b_{i}\sum_{i=1}^{k}M_{Ri}>0,
\]
case iii) of Proposition \ref{prop. 4.1} yielding a positive eigenvalue is
unphysical. In contrast, $g(0)>0$ is fulfilled in Proposition \ref{prop. 4.2}, but
it is assumed that $a_{i}>0$ (no dissipation), so a pair of complex
conjugate roots with positive real parts is also unphysical.

3. The Routh-Hurwitz theorem provides necessary and sufficient
conditions for all of the roots of a polynomial with real coefficients
to lie in the left-half of the complex plane. It allows one to locate the roots just by employing the coefficients of the polynomial which are functions of the parameters controlling the relaxation.

4. Algebraic multiplicities of eigenvalues $\lambda_{j}(\xi)$ remain
constant as $\xi$ ranges along $\mathbb{S}^{d-1}$ and 
$\lambda_{j}(\xi)$ are analytic functions away from the origin,  
admitting a power series expansion in $\xi$. This fact will be used in
Proposition \ref{prop. 4.4} below to investigate the limiting behavior of the eigenvalues as $|\xi|\to0$ and $|\xi|\to\infty$. \\
\end{remark}

\begin{proposition}\label{prop. 4.4} Let $d=1$ and $\lambda_{j}(i\text{\ensuremath{\xi}})$
for $1\leq j\leq k+2$ be the eigenvalues of $\Phi(i\xi)$, then as
$|\xi|\to0$, 
\[
\Re\left(\lambda_{j}(i\xi)\right)=\begin{cases}
-b_{j}-\xi^{2}\frac{a_{j}}{b_{j}^{2}}+O\left(\xi{}^{4}\right) & \mbox{for }j=1,2,\dots,k\\
\pm\Re\left(i\xi\sqrt{c^{2}+\sum_{i=1}^{k}\frac{a_{i}}{b_{i}}}\right)+\xi^{2}\sum_{i=1}^{k}\frac{a_{i}}{2b_{i}^{2}}+O\left(\xi^{3}\right) & \mbox{for }j=k+1,k+2
\end{cases}
\]
if $c^{2}+\sum_{i=1}^{k}\frac{a_{i}}{b_{i}}\ne0$ or 
\[
\Re\left(\lambda_{j}(i\xi)\right)=\begin{cases}
-b_{j}-\xi^{2}\frac{a_{j}}{b_{j}^{2}}+O\left(\xi{}^{4}\right) & \mbox{for }j=1,2,\dots,k\\
\xi^{2}\sum_{i=1}^{k}\frac{a_{i}}{b_{i}^{2}}+O\left(\xi{}^{4}\right) & \mbox{for }j=k+1\\
0+O\left(\xi{}^{4}\right) & \mbox{for }j=k+2
\end{cases}
\]
otherwise, and as $|\xi|\to\infty$ 
\[
\Re\left(\lambda_{j}(i\xi)\right)=\begin{cases}
\Re\left(r_{j}\right)+O\left(\xi{}^{-1}\right) & \mbox{for }j=1,2,\dots,k\\
\frac{1}{2c^{2}}\sum_{i=1}^{k}a_{i}+O\left(\xi{}^{-2}\right) & \mbox{for }j=k+1,k+2
\end{cases}
\]
\end{proposition}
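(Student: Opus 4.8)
The plan is to perform a perturbative (Puiseux/Taylor) expansion of the eigenvalues of $\Phi(i\xi)$ directly from the scalar characteristic polynomial $p(\lambda,\xi)$, exploiting that by Theorem~\ref{th. 4.1} all information is contained in
\[
q(\lambda,\xi):=\left(\lambda^{2}+c^{2}\xi^{2}+\xi^{2}\sum_{i=1}^{k}\frac{a_{i}}{\lambda+b_{i}}\right)\prod_{i=1}^{k}(\lambda+b_{i})=0 ,
\]
and that by point~4 of Remark~\ref{remark 4.1} the branches $\lambda_{j}(i\xi)$ are analytic in $\xi$ away from the origin, so the expansions are legitimate. First I would treat the limit $|\xi|\to0$. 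At $\xi=0$ the roots are $\lambda=-b_{1},\dots,-b_{k}$ (each simple, since the $b_{i}$ are distinct by {\bf[A1]}) together with a double root at $\lambda=0$. For the $k$ simple roots near $-b_{j}$ I would substitute $\lambda=-b_{j}+\xi^{2}c_{j}+O(\xi^{4})$ into $q=0$: the term $\xi^{2}a_{j}\prod_{i\ne j}(b_{i}-b_{j})$ must cancel against $\lambda^{2}\prod_{i}(\lambda+b_{i})\big|_{\lambda=-b_{j}}$ picking up the factor $(\lambda+b_{j})\approx\xi^{2}c_{j}$, which pins down $c_{j}=-a_{j}/b_{j}^{2}$; only even powers of $\xi$ appear because $\xi$ enters $q$ only through $\xi^{2}$, giving the $O(\xi^{4})$ remainder and the stated formula for $j=1,\dots,k$ (these roots are real for small $\xi$, so taking $\Re$ is harmless).

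The delicate case is the double root at the origin, which governs $j=k+1,k+2$. Writing $\lambda^{2}+c^{2}\xi^{2}+\xi^{2}\sum_{i}\frac{a_{i}}{\lambda+b_{i}}=0$ and expanding the rational term $\frac{a_{i}}{\lambda+b_{i}}=\frac{a_{i}}{b_{i}}-\frac{a_{i}}{b_{i}^{2}}\lambda+O(\lambda^{2})$ yields, to leading order, $\lambda^{2}=-\xi^{2}\big(c^{2}+\sum_{i}\frac{a_{i}}{b_{i}}\big)+\xi^{2}\big(\sum_{i}\frac{a_{i}}{b_{i}^{2}}\big)\lambda+O(\xi^{2}\lambda^{2})$. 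If $\kappa:=c^{2}+\sum_{i}\frac{a_{i}}{b_{i}}\ne0$ the two roots are $\lambda=\pm i\xi\sqrt{\kappa}+$ corrections; substituting the ansatz $\lambda=\pm i\xi\sqrt{\kappa}+\mu_{2}\xi^{2}+O(\xi^{3})$ and matching the $\xi^{2}$ terms forces $2(\pm i\sqrt{\kappa})\mu_{2}=\pm i\sqrt{\kappa}\sum_{i}\frac{a_{i}}{b_{i}^{2}}$, i.e.\ $\mu_{2}=\tfrac12\sum_{i}\frac{a_{i}}{b_{i}^{2}}$, which is real, so $\Re(\lambda_{j})=\pm\Re(i\xi\sqrt{\kappa})+\xi^{2}\sum_{i}\frac{a_{i}}{2b_{i}^{2}}+O(\xi^{3})$ as claimed (the $\pm\Re(i\xi\sqrt\kappa)$ is written to cover a possible negative $\kappa$, where $\sqrt\kappa$ is imaginary and this term is $O(\xi)$ and nonzero). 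If instead $\kappa=0$, the leading balance degenerates to $\lambda^{2}=\xi^{2}\big(\sum_{i}\frac{a_{i}}{b_{i}^{2}}\big)\lambda+O(\xi^{2}\lambda^{2})$, whose two solutions are the near-zero branch $\lambda=O(\xi^{2})$ — plug $\lambda=\nu\xi^{2}+O(\xi^{4})$ to get $\nu=\sum_{i}\frac{a_{i}}{b_{i}^{2}}$ for $j=k+1$ — and a genuinely vanishing branch $\lambda=0+O(\xi^{4})$ for $j=k+2$ (one checks $\lambda\equiv0$ is \emph{not} a root when $\xi\ne0$ but it is approached to the stated order); again only even powers of $\xi$ occur.

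For the limit $|\xi|\to\infty$ I would rescale. Dividing $q$ by $\xi^{2}$ shows the $k$ ``slow'' roots stay bounded and converge to the roots $r_{j}$ of $c^{2}\prod_{i}(\lambda+b_{i})+\sum_{i}a_{i}\prod_{\ell\ne i}(\lambda+b_{\ell})=0$; writing $\lambda_{j}(i\xi)=r_{j}+O(\xi^{-1})$ and substituting gives the correction order by implicit differentiation, and taking $\Re$ is automatic. The two ``fast'' roots behave like $\lambda\sim\pm ic\xi$; set $\lambda=\pm ic\xi+\sigma+O(\xi^{-1})$ in $\lambda^{2}+c^{2}\xi^{2}=-\xi^{2}\sum_{i}\frac{a_{i}}{\lambda+b_{i}}$, note the left side is $\pm2ic\xi\,\sigma+O(1)$ while the right side is $-\xi^{2}\sum_{i}\frac{a_{i}}{\pm ic\xi}+O(1)=\mp\frac{\xi}{ic}\sum_{i}a_{i}+O(1)$, so $\sigma=\frac{1}{2c^{2}}\sum_{i}a_{i}$, which is real and independent of the sign, giving $\Re(\lambda_{j})=\frac{1}{2c^{2}}\sum_{i}a_{i}+O(\xi^{-2})$ for $j=k+1,k+2$; the error is $O(\xi^{-2})$ rather than $O(\xi^{-1})$ because the next correction term enters at order $\xi^{-1}$ in $\lambda$ but is purely imaginary by the same parity argument. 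The main obstacle is bookkeeping the degenerate case $\kappa=0$ at the origin and, more generally, justifying that each formal Puiseux ansatz is the genuine asymptotic expansion of an analytic branch — this is where I would lean on point~4 of Remark~\ref{remark 4.1}: analyticity of $\lambda_{j}(\xi)$ away from $\xi=0$ together with the constant-multiplicity structure guarantees convergent expansions, so the formal matching computations above actually determine the true leading coefficients and the indicated remainder orders.
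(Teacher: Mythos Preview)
Your approach is essentially identical to the paper's: both carry out an algebraic perturbation expansion of the roots of the characteristic polynomial $p(\lambda,\xi)$ --- the paper packages this via Kato's framework, expanding in $\zeta=i\xi$ (and in $\nu=\zeta^{-1}$ at infinity) and computing coefficients by successively setting $\tfrac{d^{m}p}{d\zeta^{m}}\big|_{\zeta=0}=0$, while you substitute ans\"atze directly into $q$ and match orders, but the underlying computation and all resulting coefficients coincide. One small slip worth correcting: in the degenerate case $\kappa=c^{2}+\sum_{i}a_{i}/b_{i}=0$ you remark parenthetically that $\lambda\equiv0$ is \emph{not} a root for $\xi\neq0$, but in fact $g(0)=\xi^{2}\kappa\prod_{i}b_{i}=0$ in that case, so $\lambda=0$ is an exact root of $p(\lambda,\xi)$ for every $\xi$ and the $O(\xi^{4})$ remainder for $j=k+2$ is identically zero --- this only strengthens the claimed estimate.
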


\begin{proof} Let $\zeta=i\xi\in\mathbb{C}$ and recall that $\Phi(\zeta)=-\left(B+\zeta A\right)$.
Following Kato\cite{ghoul}${,}$\cite{kato1976} we treat $-B$ as an unperturbed matrix subjected
to a small perturbation $-\zeta A$. The characteristic equation of
$\Phi(\zeta)$ is an algebraic equation in $\lambda$ of degree $k+2$
and its roots are branches of analytic functions of $\zeta$ with
only algebraic singularities. Hence, in the neighborhood of $\zeta=0$
the following expansion is valid: 
\[
\lambda_{j}(\zeta)=\lambda_{j}^{(0)}+\zeta\lambda_{j}^{(1)}+\zeta^{2}\lambda_{j}^{(2)}+\dots
\]
for $1\leq j\leq k+2$. Here $\lambda_{j}^{(0)}$ are the eigenvalues
of the unperturbed matrix $-B$ and satisfy the equation 
\[
p(\lambda,-i\zeta)\Bigr|_{\zeta=0}=(-1)^{k}\left(\lambda^{(0)}\right)^{2}\prod_{i=1}^{k}\left(\lambda^{(0)}+b_{i}\right)=0
\]
so that 
\[
\lambda_{j}^{(0)}=\begin{cases}
-b_{j} & \mbox{for }j=1,2,\dots,k\\
0 & \mbox{for }j=k+1,k+2
\end{cases}
\]
Solving 
\[
(-1)^{k}\frac{dp}{d\zeta}\biggr|_{\zeta=0}=\lambda^{(0)}\lambda^{(1)}\prod_{i=1}^{k}\left(\lambda^{(0)}+b_{i}\right)\left(2+\sum_{i=1}^{k}\frac{\lambda^{(0)}}{\lambda^{(0)}+b_{i}}\right)=0\ \ \ \mbox{and}\ \ \ \frac{d^{2}p}{d\zeta^{2}}\biggr|_{\zeta=0}=0
\]
one obtains 
\[
\lambda_{j}^{(1)}=\begin{cases}
0 & \mbox{for }j=1,2,\dots,k\\
\pm\sqrt{c^{2}+\sum_{i=1}^{k}\frac{a_{i}}{b_{i}}} & \mbox{for }j=k+1,k+2
\end{cases}
\]
The next order correction comes from solving $\frac{d^{2}p}{d\zeta^{2}}\Bigr|_{\zeta=0}=0$
and $\frac{d^{3}p}{d\zeta^{3}}\Bigr|_{\zeta=0}=0$, thus 
\[
\lambda_{j}^{(2)}=\begin{cases}
\frac{a_{j}}{b_{j}^{2}} & \mbox{for }j=1,2,\dots,k\\
-\sum_{i=1}^{k}\frac{a_{i}}{b_{i}^{2}} & \mbox{for }j=k+1\\
0 & \mbox{for }j=k+2
\end{cases}\ \ \ \mbox{or}\ \ \ \lambda_{j}^{(2)}=\begin{cases}
\frac{a_{j}}{b_{j}^{2}} & \mbox{for }j=1,2,\dots,k\\
-\sum_{i=1}^{k}\frac{a_{i}}{2b_{i}^{2}} & \mbox{for }j=k+1,k+2
\end{cases}
\]
depending on whether $c^{2}+\sum_{i=1}^{k}\frac{a_{i}}{b_{i}}=0$
or not, respectively. Equation $\frac{d^{3}p}{d\zeta^{3}}\biggr|_{\zeta=0}=0$
also implies that $\lambda_{j}^{(3)}=0$ for $1\leq j\leq k$. 

When $|\xi|\to\infty$ we can write $\Phi(\zeta)=-\left(B+\zeta A\right)=-\zeta\left(A+\zeta^{-1}B\right)$
and consider $-\zeta^{-1}B$ to be a small perturbation of $-A$.
The eigenvalues $\mu_{j}\left(\zeta^{-1}\right)$ of $A+\zeta^{-1}B$
are related to those of $\Phi(\zeta)$ by $\lambda_{j}(\zeta)=\zeta\mu_{j}\left(\zeta^{-1}\right)$.
The characteristic polynomial of $A+\nu B$ is 
\[
q(\mu,\nu)=(-1)^{k}\left(\mu^{2}-c^{2}-\sum_{i=1}^{k}\frac{\nu a_{i}}{\mu+\nu b_{i}}\right)\prod_{i=1}^{k}(\mu+\nu b_{i})
\]
where $\nu=\zeta^{-1}$. In the neighborhood of $\nu=0$ we have 
\[
\mu_{j}(\nu)=\mu_{j}^{(0)}+\nu\mu_{j}^{(1)}+\nu^{2}\mu_{j}^{(2)}+\dots
\]
for $1\leq j\leq k+2$. The eigenvalues of $-A$ satisfy $q\left(\mu,\nu\right)\Bigr|_{\nu=0}=0$,
hence 
\[
\mu_{j}^{(0)}=\begin{cases}
0 & \mbox{for }j=1,2,\dots,k\\
\pm c & \mbox{for }j=k+1,k+2
\end{cases}
\]
Computing $\frac{dq}{d\nu}\biggr|_{\nu=0}=0$ we find 
\[
(-1)^{k}\left(\mu^{(0)}\right)^{k-1}\left(\left(\left(\mu^{(0)}\right)^{2}-c^{2}\right)\sum_{i=1}^{k}\left(\mu^{(1)}+b_{i}\right)+2\left(\mu^{(0)}\right)^{2}\mu^{(1)}-\sum_{i=1}^{k}a_{i}\right)=0
\]
so that 
\[
\mu_{j}^{(1)}=\begin{cases}
r_{j} & \mbox{for }j=1,2,\dots,k\\
\frac{1}{2c^{2}}\sum_{i=1}^{k}a_{i} & \mbox{for }j=k+1,k+2
\end{cases}
\]
where $r_{j}$ are roots of
$\frac{d^{k}q}{d\nu^{k}}\biggr|_{\nu=0,\mu_{j}^{(0)}=0}=0$.  One can
show
that $$\mu_{j}^{(2)}=\mp\left(\frac{1}{2c^{3}}\sum_{i=1}^{k}a_{i}b_{i}+\frac{3}{8c^{5}}\left(\sum_{i=1}^{k}a_{i}\right)^{2}\right)$$
for $j=k+1,k+2$ by solving $\frac{d^{2}q}{d\nu^{2}}\biggr|_{\nu=0}=0$.
\end{proof}

In higher dimensions the analysis is similar but lengthier, remember that $\xi$ should be
replaced by $|\xi|$ therein. 

\section{Stability}\label{Sec.5}

Well-posedness of the Cauchy problem described in Definition \ref{def. 3.2} does
not rule out the possibility of exponential growth of solutions as
time approaches infinity unless $\alpha$ is arbitrarily small or negative.
The following definition helps to eliminate exponential instabilities. \\

\begin{definition}\label{def. 5.1}
The Cauchy problem for a constant coefficient
operator $\mathcal{L}$ is weakly (strongly) stable if it is weakly
or strongly well-posed and the solution $U(t)$ satisfies 
\[
\left\Vert U(t)\right\Vert _{L^{2}\left(\mathbb{R}^{d}\right)}\leq C(1+t)^{s}\left\Vert U_{0}\right\Vert _{H^{s}\left(\mathbb{R}^{d}\right)},\ \ \ t\geq0
\]
with $C>0$ and $s>0$ ($s=0$). 
\end{definition} \\

A necessary and sufficient condition for weak stability is that all
eigenvalues $\lambda_{j}(\xi)$ of $\Phi(i\xi)=-\left(B+iA(\xi)\right)$
satisfy $\Re\left(\lambda_{j}(\xi)\right)\leq0$. Furthermore, if the
Jordan blocks corresponding to the eigenvalues with
$\Re\left(\lambda_{j}(\xi)\right)=0$ are trivial, then the problem is
strongly stable  (cf. Lemma 2.1 in \cite{solem}). \\

\begin{theorem}\label{th. 5.1}
Let $d\geq1$, $\xi\ne0$, $a_{i}<0$ for all $1\leq i\leq k$ and $g(0)\geq0$,
then the Cauchy problem for a constant coefficient operator
$\mathcal{L}$ is strongly stable.
\end{theorem}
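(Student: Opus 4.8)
The plan is to reduce the multi-dimensional stability claim to the one-dimensional spectral analysis already carried out, then combine it with the uniform diagonalizability from Proposition \ref{prop. 3.1}. First I would invoke Theorem \ref{th. 4.1}: since the characteristic polynomial of $\Phi(i\xi)$ in dimension $d$ factors as $p(\lambda,|\xi|)\lambda^{d-1}\prod_{i=1}^{k}(\lambda+b_{i})^{d-1}$, every eigenvalue of $\Phi(i\xi)$ is either $0$, one of the $-b_{i}<0$, or a root of $p(\lambda,|\xi|)$, i.e. an eigenvalue of the associated one-dimensional problem with $\xi$ replaced by $|\xi|$. For $\xi\neq0$ this lets me apply Proposition \ref{prop. 4.1}: when $a_{i}<0$ for all $i$ and $g(0)>0$ (case i) all roots of $p$ have strictly negative real parts, and when $g(0)=0$ (case ii) one root is zero and the rest have strictly negative real parts. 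The new eigenvalues $0$ and $-b_{i}$ introduced by the higher-dimensional factors contribute only non-positive real parts. Hence for every $\xi\in\mathbb{R}^{d}\setminus\{0\}$ one has $\Re(\lambda_{j}(\xi))\leq0$ for all $j$, which is the necessary and sufficient condition for weak stability quoted after Definition \ref{def. 5.1}.

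Next I would promote weak stability to strong stability by controlling the Jordan structure at the purely imaginary (here, zero) eigenvalues and ruling out secular growth. Two issues must be addressed. First, the eigenvalue $\lambda=0$ of $\Phi(i\xi)$ now has elevated algebraic multiplicity: by the factorization it contributes $\lambda^{d-1}$ from the block structure, plus an extra factor $\lambda$ in case ii) ($g(0)=0$) from the one-dimensional polynomial. I would argue these zero eigenvalues are semisimple. The cleanest route is the change of variables already recorded in the excerpt, $A(\xi)=\sum_{j=1}^{3}\lambda_{j}E_{j}$ with $\lambda_{1}=0,\ \lambda_{2,3}=\pm c|\xi|$ and the $E_{j}$ genuine spectral projections ($E_{i}E_{j}=\delta_{ij}E_{j}$, $\sum E_{j}=I$): this shows $A(\xi)$ is diagonalizable, so on the kernel of $A(\xi)$ the full generator $\Phi(i\xi)=-(B+iA(\xi))$ reduces to $-B$ restricted to that invariant subspace, and one checks directly from the explicit form of $B$ (which acts by $0$ on the $u$–component, by $\sum w_{i}$ on the $v$–component, and by $b_{i}$ on each $w_{i}$) that $B$ restricted to $\ker A(\xi)$ — essentially the $u$ and divergence-free parts of $v,w_{i}$ — has no nontrivial Jordan block at eigenvalue $0$. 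In case ii) the single additional zero root of $p(\lambda,|\xi|)$ is simple in that one-dimensional block (the other roots are strictly in the left half plane), hence also semisimple.

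The remaining point is uniformity in $\xi$, and this is where I expect the real work to be. Weak stability gives $\Re\lambda_{j}(\xi)\leq0$ pointwise, but strong stability in the sense of Definition \ref{def. 5.1} requires a bound $\|e^{-t(B+iA(\xi))}\|\leq C(1+t)^{s}$ uniform over all $\xi$, with the polynomial factor controlled by a Jordan-block size that does not blow up. Here I would split $\mathbb{R}^{d}$ into a neighborhood of the origin and its complement. Away from the origin, Proposition \ref{prop. 3.1} gives a uniformly well-conditioned diagonalization of $A(\xi)$, the eigenvalues $\Re\lambda_{j}(\xi)$ are bounded away from $0$ except for the structurally-zero ones (uniformly in $|\xi|\geq\delta$, using continuity, homogeneity, and the $|\xi|\to\infty$ asymptotics of Proposition \ref{prop. 4.4} which show the nonzero eigenvalues have real parts bounded above by a negative constant — note in case $g(0)>0$ one must also check $\tfrac{1}{2c^{2}}\sum a_{i}<0$, automatic since all $a_{i}<0$), so the semigroup is in fact uniformly bounded there. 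Near $\xi=0$, the asymptotics of Proposition \ref{prop. 4.4} show the nonzero branches satisfy $\Re\lambda_{j}(i\xi)=-b_{j}+O(\xi^{2})<0$ and $\Re\lambda_{k+1,k+2}(i\xi)=\xi^{2}\sum a_{i}/b_{i}^{2}+O(\xi^{4})\leq 0$ (again using $a_{i}<0$), while the structurally-zero eigenvalues stay exactly zero with trivial Jordan blocks by the argument above; a perturbation/compactness argument then yields a uniform $(1+t)^{s}$ bound. The main obstacle is thus the honest verification that the zero eigenvalue of $\Phi(i\xi)$ is semisimple \emph{uniformly} in $\xi$ — equivalently that no resonance between the $-b_{i}$ block and the zero block creates a Jordan chain as $|\xi|$ varies — and assembling the low- and high-frequency estimates into a single constant $C$; once that is in hand, strong stability follows from the criterion cited after Definition \ref{def. 5.1}.
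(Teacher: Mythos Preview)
Your overall architecture matches the paper's proof: use Theorem \ref{th. 4.1} and Proposition \ref{prop. 4.1} to get $\Re\lambda_j(\xi)\leq 0$, verify that the zero eigenvalue of $\Phi(i\xi)$ is semisimple, and then invoke uniform bounds (Propositions \ref{prop. 3.1} and \ref{prop. 4.4}) to pass from weak to strong stability.

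The gap is in your semisimplicity step. You assert that on $\ker A(\xi)$ the generator $\Phi(i\xi)$ ``reduces to $-B$ restricted to that invariant subspace'', but $\ker A(\xi)$ is \emph{not} $B$-invariant, hence not $\Phi(i\xi)$-invariant. Indeed for $\xi\neq 0$ one has $\ker A(\xi)=\{(0,v,w_1,\dots,w_k):\xi\cdot v=0\}$, while $B(0,v,w_1,\dots,w_k)=(0,\sum_i w_i,\, b_1 w_1,\dots,b_k w_k)$, and nothing forces $\xi\cdot\sum_i w_i=0$. So you cannot read off the Jordan structure of $\Phi(i\xi)$ at $0$ from that of $B$ restricted to $\ker A(\xi)$. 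Passing to the smaller subspace where all $w_i$ are divergence-free does give $B$-invariance, but that subspace has dimension $(k+1)(d-1)$, strictly less than $\dim\ker A(\xi)=kd+d-1$, and in any case it is not the generalized $0$-eigenspace of $\Phi(i\xi)$.

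The paper avoids this by working directly with the characteristic polynomial $\tilde p$: it records that the algebraic multiplicity of $\lambda=0$ is $m=d-1$ (or $m=d$ when $g(0)=0$), constant on $\mathbb{S}^{d-1}$, and then checks the principal-type condition $\xi\cdot\nabla_\xi\,\partial_\lambda^{\,m-1}\tilde p(0,\xi)\neq 0$, which forces the Jordan blocks at $0$ to be trivial. An equally direct fix for your argument is to compute $\ker\Phi(i\xi)$ explicitly: for $g(0)>0$ the equations $ic^2u\xi+\sum_iw_i=0$, $-ia_ju\xi+b_jw_j=0$ give $w_j=ia_jb_j^{-1}u\xi$ and then $(c^2+\sum_ja_j/b_j)u=0$, so $u=0$, $w_j=0$, and $\ker\Phi(i\xi)=\{(0,v,0):\xi\cdot v=0\}$ has dimension $d-1$, equal to the algebraic multiplicity; when $g(0)=0$ the same computation yields dimension $d$. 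Either route closes the gap, after which your low/high-frequency splitting (which is more explicit than the paper's compressed appeal to Theorem \ref{th. 3.1} and Proposition \ref{prop. 4.4}) goes through.
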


\begin{proof} By Proposition \ref{prop. 4.1}, all eigenvalues of $\Phi(i\xi)$
satisfy $\Re\left(\lambda_{j}(\xi)\right)\leq0$. Since the characteristic
polynomial in higher dimensions splits as in Eq. \eqref{eq: 4.1}, $\tilde{p}(\lambda,\xi_{1},\dots,\xi_{d})=p(\lambda,|\xi|)\lambda^{d-1}\prod_{i=1}^{k}(\lambda+b_{i})^{d-1}$,
the algebraic multiplicity $m$ of the zero eigenvalue is $m=d-1$
when $g(0)>0$ and $m=d$ if $g(0)=0$. In both cases, $m$ does not
change as $\xi$ ranges along $\mathbb{S}^{d-1}$ and moreover algebraic multiplicity
is equal to the geometric multiplicity. Note that $\lambda(\xi)=0$
solves $\frac{\partial^{m-1}}{\partial\lambda^{m-1}}\tilde{p}(\lambda,\xi_{1},\dots,\xi_{d})=0$,
but $\xi\cdot\nabla_{\xi}\frac{\partial^{m-1}}{\partial\lambda^{m-1}}\tilde{p}(\lambda,\xi_{1},\dots,\xi_{d})\ne0$
at $\lambda=0$. Hence $\Phi(i\xi)$ is of principal type at $\lambda=0$
and the Jordan blocks corresponding to zero eigenvalues are all trivial.
By Parseval's relation 
\begin{align}
\left\Vert U(t)\right\Vert _{L^{2}\left(\mathbb{R}^{d}\right)}&=\left\Vert e^{t\Phi(i\xi)}\hat{U}_{0}(\xi)\right\Vert _{L^{2}\left(\mathbb{R}^{d}\right)} =\left\Vert P^{-1}(\xi)e^{tJ}P(\xi)\hat{U}_{0}(\xi)\right\Vert _{L^{2}\left(\mathbb{R}^{d}\right)} \nonumber
\\
&\leq\left\Vert P^{-1}(\xi)\right\Vert \left\Vert P(\xi)\right\Vert \left\Vert U_{0}\right\Vert _{L^{2}\left(\mathbb{R}^{d}\right)}, \nonumber
\end{align}
where $J$ is the Jordan matrix. By Theorem \ref{th. 3.1}  the Cauchy problem
for $\mathcal{L}$ is strongly well-posed and Proposition \ref{prop. 4.4} implies
that $\Re\left(\lambda_{j}(\xi)\right)\nrightarrow+\infty$ as $|\xi|\to+\infty$,
so $\left\Vert P^{-1}(\xi)\right\Vert \left\Vert P(\xi)\right\Vert $
is bounded by a constant $C>0$ and the claim follows. \end{proof} \\

\begin{remark}\label{remark 5.1}
For $\Phi(i\xi)$ to be of principal type at $\lambda=0$, it is
important that the constant algebraic multiplicity is equal to the
geometric multiplicity. Consider, for example, case ii) (b) of
Proposition \ref{prop. 4.3}: the geometric multiplicity of the zero eigenvalue is
less than its algebraic multiplicity in any dimension $d\geq1$.  Hence
$\Phi(i\xi)$ is not of principal type at $\lambda=0$ and the Jordan
matrix contains a non-trivial block. The Cauchy problem for
$\mathcal{L}$ is only weakly stable in that case. 

 M\'{e}tivier and Zumbrun \cite{metivierzum} classify the multiple eigenvalues as algebraically regular, geometrically regular and nonregular. Eigenvalues of constant multiplicity are algebraically regular. If in addition they are semi-simple, then they are geometrically regular. Simple roots are geometrically regular by definition. Geometric regularity implies Majda's block structure condition and provides an optimal characterization of this condition.

\end{remark}

\subsection*{Energy decay}
Consider for a moment the following viscoelastic wave equation 
\begin{equation}\label{eq: 5.1}
\phi_{tt}-c^{2}\Delta\phi+\int_{0}^{t}K(t-s)\Delta\phi(s)ds=0, \ \ \  x\in\mathbb{R}^d, 
\end{equation}
together with the associated standard energy in Fourier space  
\[
\hat{E}(\xi,t)=\frac{1}{2}|\hat{\phi}_{t}|^{2}+\frac{1}{2}c^{2}|\xi|^{2}|\hat{\phi}|^{2}
\]
The following assumptions on the relaxation kernel $K(t)$ are commonly accepted in the literature: \\
\begin{enumerate}
\item[] {\bf [A2]} $K$: $\mathbb{R}^{+}\to\mathbb{R}^{+}$ is a non-increasing $\mathcal{C}^{1}$
function and $l=c^{2}-\int_{0}^{\infty}K(s)ds>0,$ 
\item[] {\bf[A3]} $K(0)>0$ and $K'(t)<0$ for all $t\geq0$. \\
\end{enumerate} 

Examples of kernels satisfying the above assumptions are $K(0){(1+t)}^{-\nu}$, $K(0)e^{-(1+t)^\nu}$ with properly chosen $\nu>1$ and $K(0)>0$. 

Assumption {\bf [A2]} has a physical origin: in statics, i.e. when $\sigma(x,t)=\bar{\sigma}(x)$ and $\epsilon(x,t)=\bar{\epsilon}(x)$ Eq. \eqref{eq: 2.3} reduces to \[\bar{\sigma}(x)=\rho\left(c^{2}-\int_{0}^{\infty}K(s)ds\right)\bar{\epsilon}(x)\] so {\bf [A2]} states that in a viscoelastic medium the equilibrium stress modulus is positive (cf. Eq. (75) in Ref. \cite{renardy} where the equilibrium stress function is considered). \\

\begin{theorem}\label{th. 5.2}
Assume that $K(t)$ satisfies {\bf [A2]} and {\bf [A3]}, then the energy of the solution to \eqref{eq: 5.1} decreases in time.  
\end{theorem}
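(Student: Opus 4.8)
The plan is to work entirely in Fourier space and differentiate the energy $\hat E(\xi,t)$ along solutions of the transformed equation
\[
\hat\phi_{tt}+c^2|\xi|^2\hat\phi-|\xi|^2\int_0^t K(t-s)\hat\phi(s)\,ds=0 .
\]
First I would compute $\frac{d}{dt}\hat E(\xi,t)=\Re\bigl(\overline{\hat\phi_t}\,\hat\phi_{tt}\bigr)+c^2|\xi|^2\Re\bigl(\overline{\hat\phi_t}\,\hat\phi\bigr)$, substitute $\hat\phi_{tt}$ from the equation, and observe that the Hookean term cancels, leaving
\[
\frac{d}{dt}\hat E(\xi,t)=|\xi|^2\,\Re\!\left(\overline{\hat\phi_t(\xi,t)}\int_0^t K(t-s)\hat\phi(\xi,s)\,ds\right).
\]
The main obstacle is that this memory term is not obviously of one sign; the standard remedy is to introduce the modified energy
\[
\mathcal E(\xi,t)=\hat E(\xi,t)+\tfrac12|\xi|^2\int_0^t K(t-s)\,|\hat\phi(\xi,t)-\hat\phi(\xi,s)|^2\,ds,
\]
i.e. to add a nonnegative correction that absorbs the bad cross term.

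Next I would differentiate $\mathcal E$. Carrying out the $t$-derivative of the correction term produces three pieces: a boundary term from the upper limit of integration, which vanishes because the integrand is zero at $s=t$; the term $\tfrac12 K(t)|\hat\phi(\xi,t)-\hat\phi(\xi,0)|^2$ contributions and the $K'$ term $\tfrac12|\xi|^2\int_0^t K'(t-s)|\hat\phi(t)-\hat\phi(s)|^2\,ds$; and a term $|\xi|^2\Re\bigl(\overline{\hat\phi_t(t)}\int_0^t K(t-s)(\hat\phi(t)-\hat\phi(s))\,ds\bigr)$. Expanding this last term and combining with the memory term already obtained for $\frac{d}{dt}\hat E$, the genuinely indefinite cross term cancels, and one is left with
\[
\frac{d}{dt}\mathcal E(\xi,t)=-\tfrac12 K(t)\,|\xi|^2\,|\hat\phi(\xi,t)-\hat\phi(\xi,0)|^2
+\tfrac12|\xi|^2\int_0^t K'(t-s)\,|\hat\phi(\xi,t)-\hat\phi(\xi,s)|^2\,ds
+ (\text{terms of one sign in }\hat\phi(t)).
\]
Here I would be careful about how the $\hat\phi_t$ versus $\hat\phi$ bookkeeping shakes out; the point I expect to make is that by \textbf{[A3]} we have $K(t)>0$ and $K'(t-s)<0$, so every surviving term is $\le 0$, whence $\frac{d}{dt}\mathcal E(\xi,t)\le 0$.

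It then remains to transfer the decay from $\mathcal E$ back to $\hat E$. Since the correction term is nonnegative, $\hat E(\xi,t)\le\mathcal E(\xi,t)\le\mathcal E(\xi,0)=\hat E(\xi,0)$, so the standard energy is bounded by its initial value for every $\xi$; integrating in $\xi$ (Parseval) gives the claimed non-increase of the physical energy $E(t)=\int_{\mathbb R^d}\hat E(\xi,t)\,d\xi$. The role of assumption \textbf{[A2]} here is not in the monotonicity computation itself but in guaranteeing coercivity — that $l=c^2-\int_0^\infty K(s)\,ds>0$ ensures the effective (long-time) wave speed stays positive so that $\hat E$ remains a genuine energy (a positive definite quantity controlling $|\hat\phi_t|^2+|\xi|^2|\hat\phi|^2$) rather than degenerating; I would remark on this and note that the generalized Zener kernel $K(t)=-\sum_{i=1}^k a_i e^{-b_i t}$ with all $a_i<0$ is covered, with \textbf{[A3]} amounting to $\sum_i a_i<0$ and $\sum_i a_i b_i<0$ and \textbf{[A2]} to $g(0)>0$ as computed in Remark \ref{remark 4.1}. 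The only real subtlety I anticipate is getting the signs and the $\hat\phi$ versus $\hat\phi_t$ terms in the differentiation of the correction exactly right so that the indefinite term cancels cleanly; everything else is bookkeeping.
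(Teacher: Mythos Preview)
Your overall strategy---work in Fourier space, differentiate the energy, add a nonnegative correction built from $K\circledast\hat\phi$, show the modified functional is nonincreasing, then transfer back to $\hat E$---is exactly the paper's approach. But your modified energy is not quite the right one, and with your choice the indefinite cross term does \emph{not} cancel.

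Concretely: differentiating your correction $\tfrac12|\xi|^2\int_0^t K(t-s)|\hat\phi(t)-\hat\phi(s)|^2\,ds$ gives
\[
\tfrac12|\xi|^2(K'\circledast\hat\phi)(t)+|\xi|^2\Re\bigl(\overline{\hat\phi_t}\,\hat\phi\bigr)\int_0^t K(s)\,ds-|\xi|^2\Re\bigl(\overline{\hat\phi_t}(K*\hat\phi)(t)\bigr),
\]
so after combining with $\frac{d}{dt}\hat E$ you are left with
\[
\frac{d}{dt}\mathcal E(\xi,t)=\tfrac12|\xi|^2(K'\circledast\hat\phi)(t)+|\xi|^2\Re\bigl(\overline{\hat\phi_t}\,\hat\phi\bigr)\int_0^t K(s)\,ds.
\]
The second term is genuinely indefinite; there is no $-\tfrac12 K(t)|\xi|^2|\hat\phi(t)-\hat\phi(0)|^2$ term here, and the ``terms of one sign in $\hat\phi(t)$'' you hoped for do not materialize. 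The fix is to also subtract $\tfrac12|\xi|^2|\hat\phi|^2\int_0^t K(s)\,ds$ from $\hat E$, i.e.\ to take
\[
\hat{\mathcal E}(\xi,t)=\tfrac12|\hat\phi_t|^2+\tfrac12\Bigl(c^2-\int_0^t K(s)\,ds\Bigr)|\xi|^2|\hat\phi|^2+\tfrac12|\xi|^2(K\circledast\hat\phi)(t).
\]
The time derivative of that extra piece supplies exactly $-|\xi|^2\Re(\overline{\hat\phi_t}\,\hat\phi)\int_0^t K(s)\,ds-\tfrac12|\xi|^2K(t)|\hat\phi|^2$, killing the bad cross term and leaving $\frac{d}{dt}\hat{\mathcal E}=\tfrac12|\xi|^2\bigl[(K'\circledast\hat\phi)(t)-K(t)|\hat\phi|^2\bigr]\le 0$ by \textbf{[A3]}. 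This is also where \textbf{[A2]} really enters: it guarantees $c^2-\int_0^t K\ge l>0$, so $\hat{\mathcal E}\ge 0$ despite the subtraction; and since now $\hat E\le c^2 l^{-1}\hat{\mathcal E}$ rather than $\hat E\le\hat{\mathcal E}$, the comparison back to $\hat E$ costs the constant $c^2/l$.
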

\begin{proof}
Multiplying the Fourier transform of \eqref{eq: 5.1},
\[
\hat{\phi}_{tt}+c^{2}|\xi|^{2}\hat{\phi}-\int_{0}^{t}|\xi|^{2}K(t-s)\hat{\phi}(s)ds=0,
\]
by $\hat{\phi}_{t}^{*}$ and taking the real part we compute 
\[
\frac{1}{2}\frac{d}{dt}\left(|\hat{\phi}_{t}|^{2}+c^{2}|\xi|^{2}|\hat{\phi}|^{2}\right)=|\xi|^{2}\Re\left(\hat{\phi}_{t}^{*}\int_{0}^{t}K(t-s)\hat{\phi}(s)ds\right)=|\xi|^{2}\Re\left(\hat{\phi}_{t}^{*}(K*\hat{\phi})(t)\right)
\]
where we utilized the first of the following convolutions: 
\begin{align*}
 & (K*f)(t)=\int_{0}^{t}K(t-s)f(s)ds,\\
 & (K\circledast f)(t)=\int_{0}^{t}K(t-s)|f(s)-f(t)|^{2}ds,
\end{align*}
defined for any real or complex valued function $f(t)$. Using the
second definition one can compute 
\[
\frac{d}{dt}\left((K\circledast\hat{\phi})(t)-|\hat{\phi}|^{2}\int_{0}^{t}K(s)ds\right)=(K'\circledast\hat{\phi})(t)-2\Re\left(\hat{\phi}_{t}^{*}(K*\hat{\phi})(t)\right)-K(t)|\hat{\phi}|^{2}
\]
Hence substituting the previously obtained expression for $\Re\left(\hat{\phi}_{t}^{*}(K*\hat{\phi})(t)\right)$ and using {\bf [A2]} and {\bf [A3]} we have
\begin{align}\label{eq: 5.2}
\frac{1}{2}\frac{d}{dt}\left(|\hat{\phi}_{t}|^{2}+|\xi|^{2}|\hat{\phi}|^{2}\left(c^{2}-\int_{0}^{t}K(s)ds\right)+|\xi|^{2}(K\circledast\hat{\phi})(t)\right)= \nonumber \\   |\xi|^2\left((K'\circledast\hat{\phi})(t)-K(t)|\hat{\phi}|^{2}\right)\leq 0
\end{align}
By introducing the following functional: 
\[
0\leq\hat{\mathcal{E}}(\xi,t)=\frac{1}{2}|\hat{\phi}_{t}|^{2}+\frac{1}{2}\left(c^{2}-\int_{0}^{t}K(s)ds\right)|\xi|^{2}|\hat{\phi}|^{2}+\frac{1}{2}|\xi|^{2}(K\circledast\hat{\phi})(t)
\]
then, by \eqref{eq: 5.2}, $\hat{\mathcal{E}}(\xi,t)$ is non-increasing and obeys $\hat{\mathcal{E}}(\xi,t)\leq\hat{\mathcal{E}}(\xi,0)=\hat{E}(\xi, 0)$, and
on the other hand $\hat{E}(\xi,t)\leq c^{2}l^{-1}\hat{\mathcal{E}}(\xi,t)$
so the uniform decay of $\hat{\mathcal{E}}$ implies the uniform decay
of $\hat{E}$. 
\end{proof} \\

In the present manuscript we have dealt with the kernel $K(t)=-\sum_{i=1}^{k}a_{i}e^{-b_{i}t}$ with constant coefficients and  in this case \eqref{eq: 2.5} reduces to \eqref{eq: 5.1}.  If we choose $a_i<0$ and \[c^{2}-\int_{0}^{\infty}K(s)ds=c^2+\sum_{i=1}^k{a_i\over b_i}=g(0)>0,\] then assumptions {\bf [A2]}, {\bf [A3]} hold true and Theorem \ref{th. 5.2} shows that the energy of an absorbing medium dissipates over time. Moreover, since $-K'(t)/K(t)$ is bounded from below by a positive constant: $-K'(t)/K(t)> b_{k}=\max_{i}b_{i}>0$, it is possible to show that the energy decays exponentially\cite{pata}${,}$\cite{saidhouari}.

Finally note that the absorption condition, ${\bf D-C}>0$, stated in Ref. \cite{becache2005mixed} is equivalent to $a_i<0$. Indeed, by definition $a_i<0$ holds whenever $\tau_{\epsilon i}>\tau_{\sigma i}$ (see Remark \ref{remark 4.1}). In the mono-dimensional Zener model ${\bf C}=\mu$ and ${\bf D}=\mu\tau_{\epsilon}/\tau_{\sigma}$, therefore, ${\bf D-C}>0$ reduces $\tau_{\epsilon}>\tau_{\sigma}$. In higher dimensions, $\tau_{\epsilon i}>\tau_{\sigma i}$ for all $1\leq i\leq k$, ensures that the tensor ${\bf D-C}$ is positive definite. 
\\ \\ \\ \\ \\ 
          % Non-BibTeX users please use


\begin{thebibliography}{}

          % and use \bibitem to create references.

\bibitem{becache2005mixed} E. B\'{e}cache, A. Ezziani and P. Joly, {\em A mixed finite element approach for viscoelastic wave propagation}, Computational Geosciences, 8(3):255-299, 2005.

\bibitem{serre2007} S. Benzoni-Gavage, D. Serre, {\em Multi-dimensional Hyperbolic
Partial Differential Equations: First-Order Systems and Applications,} Oxford Mathematical Monographs (Oxford University Press, 2007).

\bibitem{blazek2013mathematical} K. D. Blazek,  C. Stolk and W. W. Symes, {\em A mathematical framework for inverse wave problems in heterogeneous media}, Inverse Problems, 29(6):1-37, 2013.

\bibitem{carcione} J. M. Carcione, D. Kosloff and R. Kosloff, {\em Wave propagation simulation in a linear viscoacoustic medium}, Geophys. J. Int., 93(2):393-401, 1988. 

\bibitem{dafermos1970abstract} C. M. Dafermos, {\em An abstract Volterra equation with applications to linear viscoelasticity}, J. Diff. Eqs., 7(3):554-569, 1970.

\bibitem{gantmacher1959} F. Gantmacher, {\em The Theory of Matrices, volume 2} (Chelsea,
1959).

\bibitem{ghoul} T. E. Ghoul, M. Khenissi and B. Said-Houari, {\em On the stability of the Bresse system with frictional damping},  J. Math. Anal. Appl., 455(2):1870-1898, 2017. 

\bibitem{kato1976} T. Kato, {\em Perturbation Theory for Linear Operators} (Springer-Verlag, 1976).

\bibitem{kim1994local} J. U. Kim, {\em On the local regularity of solutions in linear viscoelasticity of several space dimensions}, Trans. Amer. Math. Soc., 346(2):359-398, 1994.

\bibitem{LionsMagenes} J. L. Lions and E. Magenes, {\em Non-Homogeneous Boundary Value Problems and Applications,} (Springer-Verlag, 1972).

\bibitem{liu1976} H.-P. Liu, D. L. Anderson, and H. Kanamori, {\em Velocity dispersion
due to anelasticity: implications for seismology and mantle composition},
Geophys. J. R. Astr. Soc., 47(1):41-58, 1976.

\bibitem{majda} A. Majda and S. Osher, {\em Initial-boundary value problems
for hyperbolic equations with uniformly characteristic boundary}, Comm. Pure Appl. Math., 28(5):607-675, 1975.

\bibitem{markowich} P. Markowich and M. Renardy, {\em Lax-Wendroff methods for hyperbolic
history value problems}, SIAM J. Numer. Anal., 21(1):24-51, 1984.

\bibitem{metivier} G. M\'{e}tivier, {\em The block structure condition for symmetric hyperbolic systems}, Bull. London Math. Soc., 32(6):689-702, 2000.

\bibitem{metivierzum} G. M\'{e}tivier and K. Zumbrun, {\em Hyperbolic boundary value problems for symmetric systems with variable multiplicities}, J. Diff. Eqs., 211(1):61-134, 2005. 

\bibitem{paulino2003} S. Mukherjee and G. H. Paulino, {\em The elastic-viscoelastic correspondence principle for functionally graded materials, revisited}, J. Appl. Mech., 70(3):359-363, 2003. 

\bibitem{pata} V. Pata, {\em Exponential stability in linear viscoelasticity}, Quart. Appl. Math., 64(3):499-513, 2006.

\bibitem{petrov} P. S. Petrov, A. D. Zakharenko, and M. Y. Trofimov, {\em The wave equation with viscoelastic attenuation and its application in problems of shallow-sea acoustics}, Acoust. Phys., 58(6):700-707, 2012. 

\bibitem{rauch} J. Rauch, {\em Hyperbolic Partial Differential Equations and Geometric Optics}, Graduate Studies in Mathematics, volume 133 (Amer. Math. Soc., 2012). 

\bibitem{renardy} M. Renardy, W. J. Hrusa, and J. A. Nohel, {\em Mathematical Problems
in Viscoelasticity,} Pitman Monographs and Surveys in Pure and Applied
Mathematics (John Wiley \& Sons, 1987).

\bibitem{richards1984wave} P. G. Richards, {\em On wave fronts and interfaces in anelastic media},  Bull. Seis. Soc. Amer., 74(6):2157-2165, 1984. 

\bibitem{saidhouari} B. Said-Houari and S. A. Messaoudi, {\em General decay estimates for a Cauchy viscoelastic wave problem}, Comm. Pure Appl. Anal., 13(4):1541-1551, 2014.  

\bibitem{solem} S. Solem, P. Aursand and T. Fl$\mathring{\mbox{a}}$tten {\em Wave dynamics of linear hyperbolic relaxation systems}, J. Hyper. Diff. Eqs., 12(4):655-670, 2015.
 
\bibitem{strang1} G. Strang, {\em Accurate partial difference methods II. Non-linear problems}, Numerische Mathematik, 6:37-46, 1964.  
        
\bibitem{strang2} G. Strang, {\em On strong hyperbolicity}, J. Math. Kyoto Univ., 6(3):397-417, 1967. 

\bibitem{yong} W.-A. Yong, {\em Basic aspects of hyperbolic
relaxation systems}, in {\em Advances in the Theory of Shock Waves}, Progr. Nonlinear Diff. Eqs. Appl., volume 47 (Birkh\"{a}user Boston, Boston, 2001)

          \end{thebibliography}
          \end{document}